\numberwithin{equation}{section}
\newtheorem{thm}{Theorem}[section]
\numberwithin{table}{section}
\newcommand{\sqr}[2]{{\vcenter{\vbox{\hrule height#2pt
 \hbox{\vrule width#2pt height#1pt \kern#1pt
 \vrule width#2pt}\hrule height#2pt}}}}
\newcommand{\beq}{\begin{equation}}
\newcommand{\eeq}{\end{equation}}
\newcommand{\beqar}{\begin{eqnarray}}
\newcommand{\eeqar}{\end{eqnarray}}
\def\beqars{\begin{eqnarray*}}
\def\eeqars{\end{eqnarray*}}
\newcommand{\pmd}{\hspace{-3mm} \pmod}
\newcommand{\smod}[1]{\hspace{-1mm} \pmod{#1}}
\newcommand{\qu}[2]{\Bigl({\frac{#1}{#2}}\Bigr) }
\newcommand{\dqu}[2]{\ds{\qu{#1}{#2}}}
\def \ds{\displaystyle}
\newcommand{\nn}{\mathbb{N}}
\newcommand{\zz}{\mathbb{Z}}
\newcommand{\qq}{\mathbb{Q}}
\newcommand{\cc}{\mathbb{C}}
\newcommand{\hh}{\mathbb{H}}
\begin{document} 

\markboth{Zafer Selcuk Aygin}
{Representations by sextenary quadratic forms}

\title{Representations by sextenary quadratic forms with coefficients $1,2,3$ and $6$ and on newforms in $S_{3} (\Gamma_0 (24), \chi )$ }

\author{Zafer Selcuk Aygin}
\address{Division of Mathematical Sciences, School of Physical and Mathematical Sciences, Nanyang Technological University, 21 Nanyang Link, Singapore 637371, Singapore} 
\email{selcukaygin@ntu.edu.sg}

\maketitle

\begin{abstract}
We use theory of modular forms to give formulas for $N(1^{l_1},2^{l_2},3^{l_3},6^{l_6};n)$ for all $l_1,l_2,l_3,l_6 \in \nn_0$, with $l_1+l_2+l_3+l_6=6$. We also apply our results to write newforms in $S_{3} (\Gamma_0 (24), \chi )$ in terms of eta quotients.
\end{abstract}

\keywords{Dedekind eta function; eta quotients; eta products; theta functions; Eisenstein series; Eisenstein forms; modular forms; cusp forms; Fourier coefficients; Fourier series.}
\noindent
Mathematics Subject Classification: 11F11, 11F20, 11F27, 11E20, 11E25, 11F30


\section{Introduction} \label{sec:1}
Let $\nn$, $\nn_0$, $\zz$, $\cc$ and $\hh$ denote the sets of positive integers, non-negative integers, integers, complex numbers and the upper half plane, respectively. We use the notation $q=e(z):=e^{2\pi i z}$ with $z\in \hh$, and so $|q| < 1$. Let $k , N \in \nn$ and $\Gamma_0(N)$ be the modular subgroup defined by
\beqars
\Gamma_0(N) = \left\{ \left(
\begin{array}{cc}
a & b \\
c & d
\end{array}
\right) \mid a,b,c,d\in \zz ,~ ad-bc = 1,~c \equiv 0 \smod {N}
\right\} .
\eeqars 
We write $M_k(\Gamma_0(N), \chi)$ to denote the space of modular forms of weight $k$ for $\Gamma_0(N)$ with multiplier $\chi$, and 
$E_k (\Gamma_0(N), \chi)$ and $S_k(\Gamma_0(N), \chi)$ to denote the subspaces of Eisenstein forms and cusp forms 
of $M_k(\Gamma_0(N), \chi)$, respectively. 
It is known (see \cite[p. 83]{stein}, \cite[Theorem 2.1.7]{miyake1}) that
\beqar
M_k (\Gamma_0(N),\chi) = E_k (\Gamma_0(N),\chi) \oplus S_k(\Gamma_0(N),\chi). \label{1_1}
\eeqar
Let $\chi$ and $\psi$ be primitive characters. For $n\in \nn$ we define $\ds \sigma_{(k,\chi,\psi )}(n)$ by
\beqar
\sigma_{(k, \chi,\psi )}(n) =\sum_{1 \leq d\mid n}\chi(d)\psi(n/d)d^k. \label{3_1}
\eeqar 
If $n \not\in \nn$ we set $\sigma_{(k,\chi,\psi )}(n)=0$. For each quadratic discriminant $t$, we put $\chi_{_t}(n)=\dqu{t}{n}$, where $\dqu{t}{n}$ is Kronecker symbol defined by \cite[p. 296]{vaughan}.

Suppose $k, N \in \nn$. Let $\chi$ and $\psi$ be primitive Dirichlet characters such that $\chi(-1)\psi(-1)=-1$ and with conductors $L,R \in \nn$, respectively. The weight $3$ Eisenstein series are defined by
\beqar
&& E_{3,\chi, \psi}(z)=c_0+\sum_{n \geq 1} \sigma_{(2, \chi,\psi )}(n) q^n \label{3_2}
\eeqar
where 
\beqars
c_0=\left\{\begin{array}{rl}
-B_{3,\chi}/{6} & \mbox{ if $R=1$} \\
0 & \mbox{ if \mbox{ $R>1$},} 
\end{array} \right.
\eeqars 
and the generalized Bernoulli numbers $B_{3,\chi}$ attached to $\chi$ are defined by the following equation:
\beqars
&& B_{3,\chi}= 6 [x^3] \sum_{a=1}^L\frac{\chi(a)x e^{ax}}{e^{Lx}-1},
\eeqars 
from which we compute
\begin{align*}
B_{3,\chi_{_{-3}}}=2/3, ~ B_{3,\chi_{_{-4}}}=3/2,~ B_{3,\chi_{_{-8}}}=9, ~B_{3,\chi_{_{-24}}}=138.
\end{align*}
We use Eisenstein series in Section \ref{sec:8_1} to give bases for $E_3 (\Gamma_0(24), \chi)$, see \cite[Theorem 5.9]{stein}. 

The Dedekind eta function $\eta (z)$ is the holomorphic function defined on the upper half plane $\hh$ 
by the product formula
\beqars
\eta (z) = q^{1/24} \prod_{n=1}^{\infty} (1-q^{n}).
\eeqars
A product of the form
\beqar \label{1_40}
f(z) = \prod_{1\leq \delta \mid N} \eta^{r_{\delta}} ( \delta z) ,
\eeqar 
where $r_{\delta} \in \zz$, not all zero, is called an eta quotient. We use eta quotients to express the generating functions for number of representations by quadratic forms and to give bases for $S_k (\Gamma_0(N), \chi)$. Let $\delta_1=1,~\delta_2,\ldots,~\delta_{d-1},~\delta_{d}=N$ be the divisors of $N$, in ascending order. For convenience, we use the following shorthand notation
\beqars
\eta_N[r_{\delta_1},~r_{\delta_2},\ldots,~r_{\delta_{d-1}},~r_{\delta_{d}}](z)= \eta^{r_{\delta_1}} ( \delta_1 z) \eta^{r_{\delta_2}} ( \delta_2 z) \ldots \eta^{r_{\delta_{d-1}}} ( \delta_{d-1} z) \eta^{r_{\delta_{d}}} ( \delta_{d} z).
\eeqars
The order of zeros of an eta quotient given by (\ref{1_40}) at the cusp $a/c \in \qq$ is
\beqar
v_{a/c}(f)=\frac{N}{24\gcd(c^2,N)}\sum_{1 \leq \delta|N}\frac{\gcd(\delta,c)^2 \cdot r_\delta }{\delta} , \label{etaqorder}
\eeqar
see \cite[Proposition 3.2.8, p. 34]{Ligozat}.

Let $m \in \nn$, $r_i \in \nn_0$, and $a_i \in \nn$ for all $1 \leq i \leq m$. Let 
\begin{align*}
N(a_1^{r_1},a_2^{r_2},\ldots,a_{m}^{r_m}; n)
\end{align*}
denote the number of representations of $n$ by the quadratic form
\begin{align}
\sum_{i=1}^{m} \sum_{j=1}^{r_i} a_{i}x_j^2. \label{eq:1}
\end{align}
Ramanujan's theta function $\varphi (z)$ is defined by
\beqars
\varphi(z) = \sum_{n=-\infty}^\infty q^{ n^2 },
\eeqars
thus the generating function of number of representations of $n$ by the quadratic form (\ref{eq:1}) is given by
\beqars
\sum_{n=0}^{\infty} N(a_1^{r_1},a_2^{r_2},\ldots,a_{m}^{r_m}; n) q^n= \prod_{i=1}^m \varphi^{r_i}(a_i z) . 
\eeqars

On the other hand by Jacobi's triple product identity \cite[p. 10]{Berndt} we have
\beqar \label{1_50}
\varphi(z)=\frac{\eta^5(2z)}{\eta^2(z) \eta^2(4z)}. 
\eeqar
That is, we can rewrite the generating function in terms of eta quotients as follows:
\beqars
\sum_{n=0}^{\infty} N(a_1^{r_1},a_2^{r_2},\ldots,a_{m}^{r_m}; n) q^n= \prod_{i=1}^m \left( \frac{\eta^5(2a_iz)}{\eta^2(a_iz) \eta^2(4a_iz)} \right)^{r_i} . 
\eeqars
Finding the formulas for number of representations of a number by quadratic forms is an interesting subject in number theory. See \cite{grosswald} for a classical history of this research. For contemporary accounts of the subject see, \cite{alacakesici, alacaw, chanchua, cooper2, hamieh, kokluce-1, milne, onorepr}. Recently in \cite{aaw_1,aaw_2,aaw_3,aaw_4, berkovicyesilyurt, xiayao_1} representations by sextenary quadratic forms was studied. Formulas for all of the diagonal forms with coefficients $1$, $2$ and $4$, and some of the diagonal forms with coefficients $1$ and $3$ are given in these works. In this paper we use theory of modular forms to give formulas for all diagonal sextenary quadratic forms with coefficients $1$, $2$, $3$ and $6$, i.e. we give formulas for
\begin{align}
N(1^{l_1},2^{l_2},3^{l_3},6^{l_6};n) \mbox{ for all $l_1,l_2,l_3,l_6 \in \nn_0$, with $\ds \sum_{i\mid 6} l_i=6$}. \label{9_3}
\end{align}
The formulas for all the $84$ sextenary quadratic forms are given in Tables \ref{table:9_1}--\ref{table:9_4}. Among them only $9$ were previously known in the papers mentioned above, they all agree with our results, due to different choices of eta quotients to express cusp parts of the formulas. We chose the bases for cusp form spaces in a way that each eta quotient chosen to be in the basis have different orders of zeros at infinity. We also use these bases of cusp form spaces to write newforms in terms of eta quotients.

This paper is organized as follows. In Section \ref{sec:2}, we determine the modular spaces of the generating functions of (\ref{9_3}). In Section \ref{sec:8_1} we construct the bases for these modular form spaces. In Section \ref{sec:9_1}, we use those bases obtained in Section \ref{sec:8_1}, to give formulas for (\ref{9_3}). In the last section we use bases of cusp form spaces obtained in Section \ref{sec:8_1} to write some newforms in terms of eta quotients. 

\section{Preliminary results}\label{sec:2}
In this section we use Theorem \ref{th:4_30} which is referred to as Ligozat's Criteria, see \cite[Theorem 5.7, p. 99]{Kilford}, \cite{Ligozat}, and \cite[Proposition 1, p. 284]{Lovejoy}, to determine the modular form spaces of the generating functions of (\ref{9_3}).

\begin{thm}{\label{th:4_30}}
Let $f(z)$ be an eta quotient given by {\em (\ref{1_40})} which satisfies the following conditions\\

{\rm (L1)~} $\ds \sum_{ 1\leq \delta \mid N} \delta \cdot r_{\delta} \equiv 0 \smod {24}$,\\

{\rm (L2)~} $\ds \sum_{ 1 \leq \delta \mid N} \frac{N}{\delta} \cdot r_{\delta} \equiv 0 \smod {24}$, \\

{\rm (L3)~} $\ds v_{1/d}(f(z)) \geq 0 $ for each positive divisor $d$ of $N$, \\

{\rm (L4)~} $\ds k = \frac{1}{2} \sum_{1 \leq \delta \mid N} r_{\delta} $ is a positive integer.\\
\noindent
Then $f(z) \in M_k(\Gamma_0(N),\chi)$ where the character $\chi$ is given by 
\beqar
 \chi (m) = \dqu{(-1)^ks}{m} \label{4_8}\mbox{ with }\ds s= \prod_{p \mid N} \delta^{ \vert \sum_{p \vert \delta } r_{\delta} \vert }. 
\eeqar
Furthermore, if the inequalities in {\rm (L3)} are all strict then $f(z) \in S_k(\Gamma_0(N),\chi)$.
\end{thm}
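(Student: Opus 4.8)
The plan is to derive Ligozat's Criteria from the classical transformation theory of the Dedekind eta function together with a holomorphy criterion at the cusps. First I would recall the transformation formula for $\eta(z)$ under $\mathrm{SL}_2(\zz)$: for $\gamma = \left(\begin{smallmatrix} a & b \\ c & d\end{smallmatrix}\right)$ with $c > 0$ one has $\eta(\gamma z) = \varepsilon(\gamma)(cz+d)^{1/2}\eta(z)$, where $\varepsilon(\gamma)$ is an explicit $24$-th root of unity given in terms of Dedekind sums (see e.g. the references already cited). From this, a direct computation shows that for $\gamma \in \Gamma_0(N)$ the eta quotient $f(z) = \prod_{\delta \mid N}\eta^{r_\delta}(\delta z)$ satisfies $f(\gamma z) = \chi(d)(cz+d)^k f(z)$ for the character $\chi$ and weight $k$ in the statement, \emph{provided} the integrality conditions (L1) and (L2) hold: condition (L1) is exactly what is needed for the $q^{1/24}$-type fractional powers accumulated from the $\delta z$ arguments to combine into an honest power of $q$ invariant under $z \mapsto z+1$ (equivalently, to kill the obstruction from the translation generator), and (L2) is the analogous condition after conjugating by the Fricke involution $z \mapsto -1/(Nz)$, which is needed to control the behaviour at the cusp $0$. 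Condition (L4) ensures the automorphy factor $(cz+d)^k$ has integral weight so that no extra multiplier ambiguity arises, and the computation of $s$ and hence of $\chi$ via the Kronecker symbol comes out of carefully tracking the root-of-unity factors $\varepsilon(\gamma)^{r_\delta}$ through the transformation.

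Second, having established the correct transformation law, I would verify that $f$ is holomorphic on $\hh$ (immediate, since $\eta$ is non-vanishing and holomorphic on $\hh$) and holomorphic at every cusp. Every cusp of $\Gamma_0(N)$ is $\Gamma_0(N)$-equivalent to one of the form $1/d$ with $d \mid N$ — this is a standard fact about cusps of $\Gamma_0(N)$ that I would cite — so it suffices to check the order of vanishing at these representatives. The order of $f$ at the cusp $a/c$ is given by the formula (\ref{etaqorder}) quoted in the excerpt from Ligozat, and condition (L3) says precisely that $v_{1/d}(f) \geq 0$ for all $d \mid N$. Combining holomorphy on $\hh$, the transformation law, and non-negativity of the orders at all cusps yields $f \in M_k(\Gamma_0(N),\chi)$ by the very definition of a modular form; and if all the inequalities in (L3) are strict then every $v_{1/d}(f) > 0$, which means the $q$-expansion at each cusp has no constant term, i.e. $f$ vanishes at every cusp, so $f \in S_k(\Gamma_0(N),\chi)$.

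The main obstacle is the bookkeeping in the first step: showing that the product of the root-of-unity factors $\prod_{\delta \mid N}\varepsilon_\delta(\gamma)^{r_\delta}$ — where $\varepsilon_\delta(\gamma)$ is the multiplier picked up by $\eta(\delta z)$ under $\gamma \in \Gamma_0(N)$ — collapses exactly to the Kronecker symbol $\dqu{(-1)^k s}{d}$ with $s = \prod_{p\mid N}\delta^{|\sum_{p\mid\delta} r_\delta|}$. This requires the reciprocity law for Dedekind sums and a somewhat delicate case analysis; the conditions (L1) and (L2) are exactly the congruences that make the fractional contributions cancel and leave behind a genuine quadratic character. Since this is precisely the content of \cite[Theorem 5.7, p. 99]{Kilford} and \cite[Proposition 1, p. 284]{Lovejoy}, in the write-up I would either reproduce that argument or, more economically, simply invoke those references, as the excerpt already signals; the remaining steps (holomorphy, reduction to the cusps $1/d$, reading off the cusp condition from (\ref{etaqorder}), and the strict-inequality refinement) are then routine.
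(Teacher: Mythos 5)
The paper does not actually prove this theorem --- it is quoted as Ligozat's criteria with citations to Kilford, Ligozat, and Lovejoy, which is exactly where your sketch also lands for the hard multiplier bookkeeping, so your treatment is essentially the same as the paper's. Your outline of the standard argument is sound; the one small imprecision is the claim that every cusp of $\Gamma_0(N)$ is equivalent to some $1/d$ with $d \mid N$ (in general inequivalent cusps can share a denominator when $\gcd(c,N/c)>1$), but since the order formula (\ref{etaqorder}) depends only on the denominator $c$, checking the representatives $1/d$ for $d\mid N$ still suffices.
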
 
We use the multiplicative properties of the Kronecker symbol to give a simpler description of the character given by (\ref{4_8}) for $N= 24$. Let $s'$ be the squarefree part of $s$. Then we have
{\footnotesize
\beqar
 \dqu{(-1)^ks}{m} = \left\{\begin{array}{ll}
\ds \chi_{_{ -s'}}(m), & \mbox{ if $N=24$, $k$ odd, $s'=3$}, \\
\ds \chi_{_{-4s'}}(m), & \mbox{ if $N=24$, $k$ odd, $s'=1,2,6$}.
\end{array} \right. \label{char}
\eeqar }

Next we determine the modular form spaces of generating functions of (\ref{9_3}). 
\begin{thm} \label{th:9_1}
Let $l_d \in \nn_0 $ {\rm ($d \mid 6$)} and $ l_1+l_2+l_3+l_6=6$. Then we have
\beqars
\prod_{d \mid 6} \varphi^{l_d}(d z) \in \left\{\begin{array}{ll}
M_{3}(\Gamma_0(24), \chi_{_{-4}}) & \mbox{ if $ l_1+l_3 \equiv 0 \pmd{2} $} \\
& \mbox{ and $ l_3+l_6 \equiv 0 \pmd{2} $,} \\
M_{3}(\Gamma_0(24),\chi_{_{-3}}) & \mbox{ if $ l_1+l_3 \equiv 0 \pmd{2} $} \\
&\mbox{ and $ l_3+l_6 \equiv 1 \pmd{2} $, } \\
M_{3}(\Gamma_0(24), \chi_{_{-8}}) & \mbox{ if $ l_1+l_3 \equiv 1 \pmd{2} $} \\
& \mbox{ and $ l_3+l_6 \equiv 0 \pmd{2} $,} \\
M_{3}(\Gamma_0(24),\chi_{_{-24}}) & \mbox{ if $ l_1+l_3 \equiv 1 \pmd{2} $}\\
& \mbox{ and $ l_3+l_6 \equiv 1 \pmd{2} $. } 
\end{array} \right.
\eeqars
\end{thm}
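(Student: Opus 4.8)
The plan is to rewrite the product $\prod_{d\mid 6}\varphi^{l_d}(dz)$ as an eta quotient of level $24$ and then apply Ligozat's Criteria (Theorem~\ref{th:4_30}) together with the simplified character formula (\ref{char}). First I would substitute the Jacobi triple product identity (\ref{1_50}), $\varphi(z)=\eta^5(2z)/(\eta^2(z)\eta^2(4z))$, evaluated at $z\mapsto dz$ for $d\in\{1,2,3,6\}$. Each factor $\varphi(dz)$ contributes $\eta^{-2}(dz)\eta^{5}(2dz)\eta^{-2}(4dz)$, and the exponents $2d,4d$ range over divisors of $24$ for $d\mid 6$, so the total product is an eta quotient $f(z)=\prod_{1\le\delta\mid 24}\eta^{r_\delta}(\delta z)$ with $N=24$ and with the $r_\delta$ explicitly given as linear forms in $l_1,l_2,l_3,l_6$ (for instance $r_1=-2l_1$, $r_2=5l_1-2l_2$, $r_3=-2l_3$, $r_4=5l_2-2l_1$ coming from $\varphi(z/?)$ terms — these would be tabulated carefully). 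Since $\sum_{\delta\mid 24}r_\delta = \sum_d(5-2-2)l_d = \sum_d l_d = 6$, condition (L4) gives $k=3$, a positive integer, immediately.

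Next I would verify (L1) and (L2). For (L1), $\sum_\delta \delta r_\delta$ equals $\sum_d l_d\big(2d\cdot 5 - d\cdot 2 - 4d\cdot 2\big)=\sum_d l_d(10d-2d-8d)=0$, which is trivially $\equiv 0\pmod{24}$; (L2) follows symmetrically since $\varphi$ is (up to normalization) modular and the computation $\sum_\delta(24/\delta)r_\delta$ reduces the same way, or one checks it directly from the tabulated $r_\delta$. For (L3) I would invoke the fact that $\varphi(dz)$ is holomorphic on $\hh$ and holomorphic at all cusps — being a theta series — so the product is too; concretely, $v_{1/d}(f)\ge 0$ for every $d\mid 24$ follows from (\ref{etaqorder}) applied to each $\varphi(dz)$ separately and summed, each summand being nonnegative. (In fact the product is never a cusp form here, since $\varphi(dz)$ has a nonzero constant term, so some $v_{1/d}$ vanishes; this is consistent with landing in $M_3$ and not $S_3$, but it is not needed for the statement.) Thus Theorem~\ref{th:4_30} places $f(z)\in M_3(\Gamma_0(24),\chi)$ with $\chi(m)=\dqu{(-1)^3 s}{m}=\dqu{-s}{m}$.

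It remains to identify the character, and this is the only step requiring genuine case analysis — the main (though still routine) obstacle. By (\ref{4_8}), $s=\prod_{p\mid 24}\delta^{|\sum_{p\mid\delta}r_\delta|}$, i.e. we need $\big|\sum_{p\mid\delta}r_\delta\big|$ modulo $2$ for $p=2$ and $p=3$, which determines the squarefree part $s'$ of $s$. Using the tabulated $r_\delta$: the sum over $\delta$ divisible by $3$ is $\sum_{\delta:3\mid\delta}r_\delta$, which collects exactly the $\eta$-exponents coming from $\varphi(3z)$ and $\varphi(6z)$, giving a quantity congruent to $l_3+l_6\pmod 2$ (since each $\varphi$-block contributes $5-2-2=1$ to the relevant partial sum modulo $2$); similarly the sum over $\delta$ divisible by $2$ (collecting the $\eta^5(2dz)\eta^{-2}(4dz)$ parts for all $d$, plus the $\eta^{-2}(dz)$ parts for even $d$) is congruent to $l_1+l_3\pmod 2$ after reduction. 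Hence $3\mid s'$ iff $l_3+l_6$ is odd, and $2\mid s'$ iff $l_1+l_3$ is odd, giving the four cases $s'\in\{1,3,2,6\}$. Feeding these into (\ref{char}) — which says $\chi=\chi_{-s'}$ when $s'=3$ and $\chi=\chi_{-4s'}$ when $s'\in\{1,2,6\}$ — yields $\chi_{-4}$, $\chi_{-3}$, $\chi_{-8}$, $\chi_{-24}$ respectively, matching the four alternatives in the theorem. I would close by remarking that the congruence hypotheses $\chi(-1)\psi(-1)=-1$ implicit in using weight-$3$ forms are automatic here since $k=3$ is odd and all these $\chi_{t}$ are odd characters.
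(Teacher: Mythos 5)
Your overall strategy is exactly the paper's: convert the product to a level-$24$ eta quotient via (\ref{1_50}), verify Ligozat's conditions, and read off the character from (\ref{char}). Conditions (L1), (L2), (L4) are handled correctly, and your soft argument for (L3) (additivity of the cusp orders from (\ref{etaqorder}) plus holomorphy of each $\varphi(dz)$ at every cusp) is a legitimate, slightly cleaner substitute for the paper's explicit computation of $v_{1/c}$ at the eight cusp representatives.

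There is, however, a genuine error in the character identification at the prime $2$. The quantity you compute, $\sum_{2\mid\delta} r_\delta$, equals $3l_1+l_2+3l_3+l_6\equiv l_1+l_2+l_3+l_6\equiv 6\equiv 0\pmod 2$ for \emph{every} choice of the $l_d$ (by your own observation each $\varphi$-block contributes $5-2-2\equiv 1$, and there are six blocks), so it cannot detect the parity of $l_1+l_3$; taken literally it would force $s'$ to be odd in all cases and kill the $\chi_{_{-8}}$ and $\chi_{_{-24}}$ branches. The quantity actually needed is the exponent of $2$ in $s=\prod_{\delta}\delta^{r_\delta}$ (the standard form of Ligozat's character; the paper's (\ref{4_8}) is typeset ambiguously, which may be the source of the confusion), namely $\sum_{\delta} v_2(\delta)\,r_\delta$ where $v_2$ is the $2$-adic valuation. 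Each block $\varphi(dz)=\eta^{-2}(dz)\eta^{5}(2dz)\eta^{-2}(4dz)$ contributes $-2v_2(d)+5(v_2(d)+1)-2(v_2(d)+2)=v_2(d)+1$ to this sum, i.e.\ $1$ for $d=1,3$ and $0\pmod 2$ for $d=2,6$, which gives $l_1+l_3\pmod 2$ as required. Your $p=3$ computation survives only because $v_3(\delta)\le 1$ for all $\delta\mid 24$, so there $\sum_{3\mid\delta}r_\delta$ and $\sum_{\delta}v_3(\delta)r_\delta$ coincide and correctly produce $l_3+l_6$. With this one correction the argument goes through and yields the four characters as stated.
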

\begin{proof}
Let $l_d \in \nn_0 $ {\rm ($d \mid 6$)} and $ l_1+l_2+l_3+l_6=6$, then we have
\begin{align}
\prod_{d \mid 6} \varphi^{l_d}(d z)&=\prod_{d \mid 6} \frac{\eta^{5l_d}(2 d z)}{\eta^{2l_d}( d z) \eta^{2 l_d}(4 d z)} \label{9_4} \\
&=\eta_{24}[-2l_1, 5l_1-2l_2, -2l_3 , -2l_1+5l_2, 5l_3-2l_6 , -2l_2 , -2l_3+5l_6 , -2l_6 ](z). \nonumber
\end{align}
By \cite[Proposition 2.6]{iwaniec} a set of representatives of all cusps of $\Gamma_0(24)$ can be chosen as follows:
\beqars
R(\Gamma_0(24))=\left\{ 1,\frac{1}{2},\frac{1}{3},\frac{1}{4},\frac{1}{6},\frac{1}{8},\frac{1}{12},\frac{1}{24} \right\}.
\eeqars
We use (\ref{etaqorder}) to compute the orders of (\ref{9_4}) at each cusp $r \in R(\Gamma_0(24))$:
\beqars
&& v_{1/c}\left(\prod_{d \mid 6} \varphi^{l_d}(d z)\right)= 0 \mbox{ for $c =1,3,8,24$},\\
&& v_{1/2}\left(\prod_{d \mid 6} \varphi^{l_d}(d z)\right)=\frac{3}{2}l_1+\frac{1}{2}l_3 \geq 0,\qquad v_{1/4}\left(\prod_{d \mid 6} \varphi^{l_d}(d z)\right)= \frac{3}{2}l_2+\frac{1}{2}l_6 \geq 0,\\
&& v_{1/6}\left(\prod_{d \mid 6} \varphi^{l_d}(d z)\right)= \frac{1}{2}l_1+\frac{3}{2}l_3 \geq 0,\qquad v_{1/12}\left(\prod_{d \mid 6} \varphi^{l_d}(d z)\right)= \frac{1}{2}l_2+\frac{3}{2}l_6 \geq 0.
\eeqars
So by Theorem \ref{th:4_30}, $\ds \prod_{d \mid 6} \varphi^{l_d}(d z)$ is in $M_{3}(\Gamma_0(24), \chi)$, where, by appealing to (\ref{char}), we have
\beqars
\chi = \left\{\begin{array}{ll}
\chi_{_{-4}} & \mbox{ if $ l_1+l_3 $ is even and $ l_3+l_6 $ is even, } \\
\chi_{_{-3}} & \mbox{ if $ l_1+l_3 $ is even and $ l_3+l_6 $ is odd, } \\
\chi_{_{-8}} & \mbox{ if $ l_1+l_3 $ is odd and $ l_3+l_6 $ is even, } \\
\chi_{_{-24}} & \mbox{ if $ l_1+l_3 $ is odd and $ l_3+l_6 $ is odd. } \\
\end{array} \right.
\eeqars
\end{proof} 

\section{Bases for $M_3(\Gamma_0(24),\chi)$} \label{sec:8_1}
In this section we give bases for $M_3(\Gamma_0(24), \chi)$ ($\chi= \chi_{_{-3}},\chi_{_{-4}},\chi_{_{-8}},\chi_{_{-24}}$) in terms of Eisenstein series and eta quotients. 

\begin{thm} \label{th:8_1} {\bf (i)}The set of Eisenstein series
\beqars
&E(3,24,\chi_{_{-3}})= & \{E_{3,\chi_{_{-3}},\chi_{_{1}}}(dz), E_{3,\chi_{_{1}},\chi_{_{-3}}}(dz) \mid d =1,2,4, 8 \} 
\eeqars
constitute a basis for $E_3(\Gamma_0(24),\chi_{_{-3}})$. \\
{\bf (ii)} The ordered set of eta quotients
\begin{align*}
S(3,24,\chi_{_{-3}})=& \{ \eta_{24}[0,3,0,-4,-5,2,16,-6](z), \eta_{24}[1,-1,-3,1,7,0,1,0](z), \\
& \hspace{3mm} \eta_{24}[0,2,0,-1,-2,0,7,0](z), \eta_{24}[0,1,0,2,1,-2,-2,6](z)\}
\end{align*}
constitute a basis for $S_3(\Gamma_0(24),\chi_{_{-3}})$. \\
{\bf (iii)} The set $E(3,24,\chi_{_{-3}}) \cup S(3,24,\chi_{_{-3}})$ constitute a basis for $M_3(\Gamma_0(24),\chi_{_{-3}})$.
\end{thm}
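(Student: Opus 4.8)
The plan is to verify Theorem \ref{th:8_1} in three stages, treating the Eisenstein part, the cusp part, and the direct-sum decomposition separately, and in each case combining a dimension count with a linear-independence argument.

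First I would handle part (i). By the standard structure theory of Eisenstein series (e.g. \cite[Theorem 5.9]{stein}), a basis for $E_3(\Gamma_0(24),\chi_{_{-3}})$ is indexed by pairs of primitive characters $(\chi,\psi)$ with $\chi\psi = \chi_{_{-3}}$ (as characters mod $24$), $\chi(-1)\psi(-1) = -1$, together with the admissible scalings $d$ with $d\cdot(\text{cond}\,\chi)(\text{cond}\,\psi)\mid 24$. Since $\chi_{_{-3}}$ has conductor $3$, the only factorizations into primitive characters are $(\chi_{_{-3}},\chi_1)$ and $(\chi_1,\chi_{_{-3}})$, both of which satisfy the sign condition because $\chi_{_{-3}}(-1) = -1$. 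For each, the conductor product is $3$, so the allowed scalings are $d\mid 8$, i.e. $d\in\{1,2,4,8\}$; this yields exactly the eight functions listed, and the general theory guarantees they are linearly independent and span. So part (i) reduces to quoting \cite[Theorem 5.9]{stein} and checking $\dim E_3(\Gamma_0(24),\chi_{_{-3}}) = 8$, which follows from the same reference or from a dimension formula.

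For part (ii), the first step is to confirm that each of the four eta quotients in $S(3,24,\chi_{_{-3}})$ actually lies in $S_3(\Gamma_0(24),\chi_{_{-3}})$: one checks the Ligozat conditions (L1), (L2), (L4), the strict positivity in (L3) at all eight cusps of $\Gamma_0(24)$ via \eqref{etaqorder}, and that the character computed from \eqref{4_8}--\eqref{char} is $\chi_{_{-3}}$ — i.e. $k=3$ odd with squarefree part $s'=1,2,6$ so that $\chi=\chi_{_{-4s'}}$, which must reduce to $\chi_{_{-3}}$; a glance at the exponent data $s=\prod_{p\mid 24}\delta^{|\sum_{p\mid\delta}r_\delta|}$ over $p=2,3$ should give this. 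Next, linear independence: here the paper's design choice pays off — the four eta quotients are stated to have pairwise distinct orders of vanishing at the cusp $\infty$, so their $q$-expansions begin at distinct powers of $q$, forcing linear independence. Finally I would invoke a dimension formula for $S_3(\Gamma_0(24),\chi_{_{-3}})$ (the Cohen--Oesterlé formula, or the tables in \cite{stein}) to see that this dimension equals $4$; four independent elements in a $4$-dimensional space form a basis. The main obstacle is this dimension computation together with the cusp-order bookkeeping — it is routine but must be done carefully, since an off-by-one there would invalidate the whole argument; I expect the orders at $\infty$ to be the four distinct values $\tfrac{N}{24}\sum_\delta r_\delta/\delta$ and would tabulate them explicitly.

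Part (iii) is then immediate: by \eqref{1_1} we have the internal direct sum $M_3(\Gamma_0(24),\chi_{_{-3}}) = E_3(\Gamma_0(24),\chi_{_{-3}}) \oplus S_3(\Gamma_0(24),\chi_{_{-3}})$, and the union of a basis of the Eisenstein summand (part (i)) with a basis of the cusp summand (part (ii)) is a basis of the whole space; no further independence check across the two blocks is needed because the sum is direct. I would state this in one line, noting $\dim M_3 = 8 + 4 = 12$ as a consistency check against the general dimension formula.
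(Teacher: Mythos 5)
Your proposal matches the paper's proof in all three parts: (i) is the application of \cite[Theorem 5.9]{stein}, (ii) combines Ligozat's criteria (Theorem \ref{th:4_30}) and the order formula \eqref{etaqorder} with the observation that distinct orders of vanishing at $\infty$ force linear independence, checked against $\dim S_3(\Gamma_0(24),\chi_{-3})=4$, and (iii) is immediate from the decomposition \eqref{1_1}. The one place where you cut a corner is in (i): viewed modulo $24$ the character $\chi_{-3}$ also factors as $\chi_{8}\chi_{-24}$, $\chi_{-8}\chi_{24}$ and $\chi_{-4}\chi_{12}$, so it is not true that $(\chi_{-3},\chi_1)$ and $(\chi_1,\chi_{-3})$ are the only factorizations into primitive characters; the paper tabulates all eight characters of conductor dividing $24$ and discards these extra pairs because their conductor products $192$, $192$ and $48$ do not divide $24$. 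Your own stated divisibility condition $d\cdot LR\mid 24$ would eliminate them as well, so the conclusion stands, but the enumeration of all factorizations is a step that should not be skipped.
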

\begin{proof} 
Let $\varepsilon$ and $\psi$ be primitive characters with conductors $R$ and $L$, respectively. By \cite[Theorem 5.9]{stein}, the set of Eisenstein series
\beqars
 \{ E_{2,\varepsilon,\psi}(tz) \mid \varepsilon \psi = \chi_{_{-3}},~ RLt \mid 24 \} 
\eeqars
gives a basis for $E_3(\Gamma_0(24),\chi_{_{-3}})$. All (primitive) characters with conductors dividing $24$ and their values at numbers coprime to 24 are given in the following table, where $C$ denotes conductor of the corresponding character.
\begin{longtable}{ l r r r r r r r r r } 
\hline \hline
 $$ & $C$ & $1$ &$5$ & $7$ & $11$&$13$ &$17$ & $19$ & $23$ \\ 
\hline \hline
\endfirsthead
\hline
\hline 
 $$ & $C$ & $1$ &$5$ & $7$ & $11$&$13$ &$17$ & $19$ & $23$ \\ 
\hline \hline
\endhead
\hline 
\endfoot
\hline
\endlastfoot
 $\chi_{_1}$ & $1$ & $1$ & $1$ & $1$ & $1$ & $1$ & $1$ & $1$ & $1$ \\ 
$\chi_{_{-24}}$ & $24$ & $1$ & $1$ & $1$ & $1$ & $-1$ & $-1$ & $-1$ & $-1$ \\ 
$\chi_{_{-4}}$ & $4$ & $1$ & $1$ & $-1$ & $-1$ & $1$ & $1$ & $-1$ & $-1$ \\ 
 $\chi_{_{24}}$ & $24$ & $1$ & $1$ & $-1$ & $-1$ & $-1$ & $-1$ & $1$ & $1$ \\ 
 $\chi_{_{8}}$ & $8$& $1$ & $-1$ & $1$ & $-1$ & $-1$ & $1$ & $-1$ & $1$ \\ 
 $\chi_{_{-3}}$ & $3$ & $1$ & $-1$ & $1$ & $-1$ & $1$ & $-1$ & $1$ & $-1$ \\ 
 $\chi_{_{-8}}$ &$8$ & $1$ & $-1$ & $-1$ & $1$ & $-1$ & $1$ & $1$ & $-1$ \\ 
 $\chi_{_{12}}$ & $12$ & $1$ & $-1$ & $-1$ & $1$ & $1$ & $-1$ & $-1$ & $1$ \\ 
\end{longtable}
From the table we calculate
\beqars
& \chi_{_{1}} \chi_{-3}=\chi_{_{-3}}, & \chi_{_{-3}} \chi_{_{1}}=\chi_{_{-3}} \mbox{ with $RL=3 \mid 24$,}\\
& \chi_{_{8}} \chi_{-24}=\chi_{_{-3}}, & \chi_{_{-24}} \chi_{_{8}}=\chi_{_{-3}} \mbox{ with $RL=192 \nmid 24$,}\\
& \chi_{_{-8}} \chi_{24}=\chi_{_{-3}}, & \chi_{_{24}} \chi_{_{-8}}=\chi_{_{-3}} \mbox{ with $RL=192 \nmid 24$,}\\
& \chi_{_{-4}} \chi_{12}=\chi_{_{-3}}, & \chi_{_{12}} \chi_{_{-4}}=\chi_{_{-3}} \mbox{ with $RL=48 \nmid 24$.}
\eeqars
That is $\{ E_{2,\varepsilon,\psi}(tz) \mid \varepsilon \psi = \chi_{_{-3}},~ RLt \mid 24 \} = E(3,24,\chi_{_{-3}})$. Thus we deduce that the set of Eisenstein series given by $E(3,24,\chi_{_{-3}})$ constitute a basis for $E_3(\Gamma_0(24),\chi_{_{-3}})$.

Now we prove assertion (ii) of the theorem. From \cite[Section 6.3]{stein} we obtain
\beqars
&& \dim(S_3(\Gamma_0(24),\chi_{_{-3}}))=4.
\eeqars
Let $f(z)=\eta_{24}[0,3,0,-4,-5,2,16,-6](z)$. By (\ref{etaqorder}), for $r\in R(\Gamma_0(24))$ we have
\beqars
v_r(f(z))= \left\{\begin{array}{ll}
5, & \mbox{ if $r=\frac{1}{12}$} , \\
1, & \mbox{ otherwise}.\\
\end{array} \right.
\eeqars
So, by Theorem \ref{th:4_30} and (\ref{char}), $f(z)\in S_3(\Gamma_0(24),\chi_{_{-3}})$. Similarly we show that for all eta quotients $f(z) \in S(3,24,\chi_{_{-3}})$, we have $f(z) \in S_3(\Gamma_0(24),\chi_{_{-3}})$. The orders of all the eta quotients in $S(3,24,\chi_{_{-3}})$ at the cusp $\ds \frac{1}{24}$ (or equivalently at $\infty$) are different. Thus eta quotients given in $ S(3,24,\chi_{_{-3}})$ are linearly independent, which completes the proof of part (ii).

Part (iii) of theorem follows from (\ref{1_1}).
\end{proof}
The rest of the theorems in this section can be proven similarly.
\begin{thm}\label{th:8_2} {\bf (i)}The set of Eisenstein series
\beqars
&E(3,24,\chi_{_{-4}})= & \{E_{3,\chi_{_{-4}},\chi_{_{1}}}(dz), E_{3,\chi_{_{1}},\chi_{_{-4}}}(dz) \mid d=1,2,3, 6 \} 
\eeqars
constitute a basis for $E_3(\Gamma_0(24),\chi_{_{-4}})$. \\
{\bf (ii)} The ordered set of eta quotients
\begin{align*}
S(3,24,\chi_{_{-4}})=& \{ \eta_{24}[1,-1,-3,0,7,2,2,-2](z),\eta_{24}[0,2,0,-2,-2,2,8,-2](z), \\
& ~ \eta_{24}[0,0,0,4,0,-2,2,2](z), \eta_{24}[0,1,0,1,1,0-1,4](z)\}
\end{align*}
constitute a basis for $S_3(\Gamma_0(24),\chi_{_{-4}})$. \\
{\bf (iii)} The set $E(3,24,\chi_{_{-4}}) \cup S(3,24,\chi_{_{-4}})$ constitute a basis for $M_3(\Gamma_0(24),\chi_{_{-4}})$.
\end{thm}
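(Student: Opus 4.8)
The plan is to mimic, step by step, the proof of Theorem \ref{th:8_1}, replacing $\chi_{_{-3}}$ by $\chi_{_{-4}}$ throughout.

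For part (i), I would apply \cite[Theorem 5.9]{stein}: the set $\{E_{2,\varepsilon,\psi}(tz) \mid \varepsilon\psi=\chi_{_{-4}},\ RLt\mid 24\}$, where $R$ and $L$ are the conductors of the primitive characters $\varepsilon$ and $\psi$, is a basis of $E_3(\Gamma_0(24),\chi_{_{-4}})$. Using the character table displayed in the proof of Theorem \ref{th:8_1}, I would run through every primitive $\varepsilon$ of conductor dividing $24$ and form $\psi=\varepsilon\,\chi_{_{-4}}$: one obtains $\chi_{_1}\chi_{_{-4}}=\chi_{_{-4}}\chi_{_1}=\chi_{_{-4}}$ with $RL=4\mid 24$, whereas the remaining factorizations $\chi_{_8}\chi_{_{-8}}$, $\chi_{_{-24}}\chi_{_{24}}$, $\chi_{_{-3}}\chi_{_{12}}$ (and their reversals) have $RL=64,\,576,\,36$ respectively, none of which divides $24$. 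Hence the admissible pairs are exactly $(\chi_{_{-4}},\chi_{_1})$ and $(\chi_{_1},\chi_{_{-4}})$, for which $RLt\mid 24\iff t\mid 6$; this yields precisely the set $E(3,24,\chi_{_{-4}})$, proving (i).

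For part (ii), I would first record $\dim S_3(\Gamma_0(24),\chi_{_{-4}})=4$ from \cite[Section 6.3]{stein}. For each of the four eta quotients $f(z)\in S(3,24,\chi_{_{-4}})$ I would verify the hypotheses of Theorem \ref{th:4_30}: (L1) and (L2) are the two congruences modulo $24$ on the exponent vector, (L4) is that one half the sum of the exponents equals the weight $3$, and (L3) --- in fact its strict form --- follows by computing $v_{1/c}(f)$ for the cusp representatives $c\in\{1,2,3,4,6,8,12,24\}$ of $\Gamma_0(24)$ (as used in the proof of Theorem \ref{th:9_1}) via (\ref{etaqorder}) and checking that all are positive. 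The associated character is then identified as $\chi_{_{-4}}$ by computing the squarefree part $s'$ of $s$ and invoking (\ref{char}). This shows $f\in S_3(\Gamma_0(24),\chi_{_{-4}})$ for each of the four forms. Finally, reading off the four orders $v_{1/24}(f)$ (equivalently, the orders of vanishing at $\infty$) and noting that they are pairwise distinct forces the four eta quotients to be linearly independent; combined with the dimension count, $S(3,24,\chi_{_{-4}})$ is a basis of $S_3(\Gamma_0(24),\chi_{_{-4}})$. Part (iii) is then immediate from (\ref{1_1}).

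There is no conceptual obstacle here, since everything reduces to the already-established Theorem \ref{th:4_30}, the character formula (\ref{char}), and the dimension formula of \cite{stein}. The only genuine labour is the per-cusp bookkeeping in the order computations (\ref{etaqorder}) for the four eta quotients; the one point needing care is to read the fourth exponent vector as $\eta_{24}[0,1,0,1,1,0,-1,4](z)$ --- correcting the evident typographical slip --- before carrying out the arithmetic.
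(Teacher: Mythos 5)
Your proposal is correct and takes essentially the same approach as the paper: the paper proves Theorem \ref{th:8_1} in detail and then disposes of this statement with ``the rest of the theorems in this section can be proven similarly,'' which is exactly the adaptation you carry out (Stein's Theorem 5.9 plus the character table for part (i), Ligozat's criteria, the cusp-order computation, distinct orders at $\infty$, and the dimension count for part (ii), and the decomposition (\ref{1_1}) for part (iii)). Your reading of the fourth exponent vector as $\eta_{24}[0,1,0,1,1,0,-1,4](z)$, so that the exponents sum to $6$ and the weight is $3$, is the right correction of the typographical slip.
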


\begin{thm} {\bf (i)}The set of Eisenstein series
\beqars
&E(3,24,\chi_{_{-8}})= & \{E_{3,\chi_{_{-8}},\chi_{_{1}}}(dz), E_{3,\chi_{_{1}},\chi_{_{-8}}}(dz) \mid d =1, 3 \}
\eeqars
constitute a basis for $E_3(\Gamma_0(24),\chi_{_{-8}})$. \\
{\bf (ii)} The ordered set of eta quotients
\begin{align*}
S(3,24,\chi_{_{-8}})=& \{ \eta_{24}[2,-2,-4,-2,7,2,7,-4](z), \eta_{24}[1,1,-1,-4,-2,2,13,-4](z), \\
& ~\eta_{24}[2,-3,-4,1,10,0,-2,2](z), \eta_{24}[1,0,-1,-1,1,0,4,2](z), \\
& ~\eta_{24}[0,1,2,0,-2,-1,1,5](z),\eta_{24}[1,-1,-1,2,4,-2,-5,8](z)\}
\end{align*}
constitute a basis for $S_3(\Gamma_0(24),\chi_{_{-8}})$. \\
{\bf (iii)} The set $E(3,24,\chi_{_{-8}}) \cup S(3,24,\chi_{_{-8}})$ constitute a basis for $M_3(\Gamma_0(24),\chi_{_{-8}})$.
\end{thm}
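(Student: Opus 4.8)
The plan is to follow the proof of Theorem \ref{th:8_1} essentially verbatim, as the author indicates; the three assertions are of the same type, and each reduces to a finite check.

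For part (i), I would invoke \cite[Theorem 5.9]{stein}: if $\varepsilon,\psi$ run over primitive Dirichlet characters of conductors $R,L$, then $\{E_{2,\varepsilon,\psi}(tz)\mid \varepsilon\psi=\chi_{_{-8}},~RLt\mid 24\}$ is a basis of $E_3(\Gamma_0(24),\chi_{_{-8}})$. Consulting the character table displayed in the proof of Theorem \ref{th:8_1}, I would list all factorizations $\varepsilon\psi=\chi_{_{-8}}$ with $R,L\mid 24$: the pairs $\chi_{_1}\chi_{_{-8}}$ and $\chi_{_{-8}}\chi_{_1}$ have $RL=8$, while every other pair (such as $\chi_{_{-4}}\chi_{_8}$ with $RL=32$, $\chi_{_{-3}}\chi_{_{24}}$ with $RL=72$, $\chi_{_{12}}\chi_{_{-24}}$ with $RL=288$) has $RL\nmid 24$. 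Since $24/8=3$, the admissible shifts are $t=1,3$, so the spanning set is precisely $\{E_{3,\chi_{_{-8}},\chi_{_1}}(dz),~E_{3,\chi_{_1},\chi_{_{-8}}}(dz)\mid d=1,3\}=E(3,24,\chi_{_{-8}})$, which proves (i).

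For part (ii), I would first read off $\dim S_3(\Gamma_0(24),\chi_{_{-8}})=6$ from \cite[Section 6.3]{stein}. Then for each of the six eta quotients $f(z)\in S(3,24,\chi_{_{-8}})$ I would verify the hypotheses of the Ligozat criteria, Theorem \ref{th:4_30}: the exponents sum to $6$, so $k=3$; both $\sum_{\delta\mid 24}\delta\, r_\delta$ and $\sum_{\delta\mid 24}(24/\delta)\,r_\delta$ are $\equiv 0\pmod{24}$; and, using (\ref{etaqorder}), the order $v_{1/c}(f)$ is strictly positive at each of the eight cusps $r\in R(\Gamma_0(24))$. By Theorem \ref{th:4_30} this places $f$ in $S_3(\Gamma_0(24),\chi)$, and (\ref{char}) identifies $\chi=\chi_{_{-8}}$ (here $k$ is odd and, in the notation preceding (\ref{char}), $s'=2$ for each of them). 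Finally, the six quotients have pairwise distinct orders at $\infty$ (namely $1,2,3,4,5,6$), hence are linearly independent by comparing leading $q$-coefficients; being six linearly independent elements of a six-dimensional space, they form a basis, proving (ii). Part (iii) then follows at once from the decomposition (\ref{1_1}), $M_3(\Gamma_0(24),\chi_{_{-8}})=E_3(\Gamma_0(24),\chi_{_{-8}})\oplus S_3(\Gamma_0(24),\chi_{_{-8}})$.

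The only real obstacle is bookkeeping. There are six eta quotients here rather than the four of Theorem \ref{th:8_1}, each requiring an order computation at all eight cusps of $\Gamma_0(24)$; one must take care that the orders at $\infty$ are genuinely all distinct (this is exactly what replaces a Wronskian argument for linear independence) and that the counted dimension $6$ from \cite{stein} is matched exactly. Conceptually nothing beyond the proof of Theorem \ref{th:8_1} is needed.
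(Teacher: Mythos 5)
Your proposal is correct and is exactly the argument the paper intends: the paper proves only Theorem \ref{th:8_1} in detail and states that the remaining theorems of the section, including this one, "can be proven similarly," which is precisely what you carry out (Stein's Theorem 5.9 and the character table for the Eisenstein part, Ligozat's criteria with strict positivity of the cusp orders for membership in the cusp space, and the distinct orders $1,2,3,4,5,6$ at $\infty$ against $\dim S_3(\Gamma_0(24),\chi_{_{-8}})=6$ for the basis claim). No gaps; the remaining work is the finite verification you describe.
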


\begin{thm}\label{th:1} {\bf (i)}The set of Eisenstein series
\beqars
&E(3,24,\chi_{_{-24}})= & \{E_{3,\chi_{_{-24}},\chi_{_{1}}}(z), E_{3,\chi_{_{1}},\chi_{_{-24}}}(z), E_{3,\chi_{_{-3}},\chi_{_{8}}}(z), E_{3,\chi_{_{8}},\chi_{_{-3}}}(z) \}
\eeqars
constitute a basis for $E_3(\Gamma_0(24),\chi_{_{-24}})$. \\
{\bf (ii)} The ordered set of eta quotients
\begin{align*}
S(3,24,\chi_{_{-24}})=& \{ \eta_{24}[1,1,-1,-5,-2,4,14,-6](z),\eta_{24}[2,-3,-4,0,10,2,-1,0](z) \\
&~ \eta_{24}[1,0,-1,-2,1,2,5,0](z), \eta_{24}[1,-2,-1,4,3,-2,-1,4](z), \\
& ~\eta_{24}[1,-1,-1,1,4,0,-4,6](z), \eta_{24}[-1,4,1,0,-1,-2,-3,8](z)\}
\end{align*}
constitute a basis for $S_3(\Gamma_0(24),\chi_{_{-24}})$. \\
{\bf (iii)} The set $E(3,24,\chi_{_{-24}}) \cup S(3,24,\chi_{_{-24}})$ constitute a basis for $M_3(\Gamma_0(24),\chi_{_{-24}})$.
\end{thm}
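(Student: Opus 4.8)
The plan is to follow verbatim the template established in the proof of Theorem \ref{th:8_1}, specialized to the character $\chi_{_{-24}}$. The argument has three independent pieces corresponding to the three assertions, and each piece is a routine (if tedious) verification once the machinery of Theorem \ref{th:4_30}, equation (\ref{char}), and \cite[Theorem 5.9]{stein} is in place.

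For part (i), I would invoke \cite[Theorem 5.9]{stein}: the set $\{E_{2,\varepsilon,\psi}(tz) \mid \varepsilon\psi = \chi_{_{-24}},\ RLt \mid 24\}$ is a basis for $E_3(\Gamma_0(24),\chi_{_{-24}})$, where $R,L$ are the conductors of $\varepsilon,\psi$. Using the character table displayed in the proof of Theorem \ref{th:8_1}, I would enumerate all factorizations $\varepsilon\psi=\chi_{_{-24}}$ into primitive characters of conductor dividing $24$: one finds the pairs $(\chi_{_{-24}},\chi_{_1})$, $(\chi_{_1},\chi_{_{-24}})$ with $RL=24\mid 24$, the pairs $(\chi_{_{-3}},\chi_{_{8}})$, $(\chi_{_{8}},\chi_{_{-3}})$ with $RL=24\mid 24$, and the remaining pairs (e.g. $(\chi_{_{-4}},\chi_{_{24}})$, $(\chi_{_{12}},\chi_{_{-8}})$, $(\chi_{_{-8}},\chi_{_{12}})$, $(\chi_{_{24}},\chi_{_{-4}})$) all of which have $RL$ a proper multiple of $24$, hence forcing $t$ to be a non-integer divisor and so contributing nothing. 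Thus $t$ is forced to equal $1$ in every surviving case, which reproduces exactly the four Eisenstein series listed in $E(3,24,\chi_{_{-24}})$, and the dimension count from \cite[Section 6.3]{stein} confirms this is a basis.

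For part (ii), I would first record $\dim S_3(\Gamma_0(24),\chi_{_{-24}})=6$ from \cite[Section 6.3]{stein}. Then, for each of the six eta quotients $f(z)$ listed in $S(3,24,\chi_{_{-24}})$, I would check conditions (L1)--(L4) of Theorem \ref{th:4_30}: (L1) $\sum_{\delta\mid 24}\delta r_\delta\equiv 0\pmod{24}$ and (L2) $\sum_{\delta\mid 24}(24/\delta)r_\delta\equiv 0\pmod{24}$ are arithmetic checks on the exponent vectors; (L4) $k=\tfrac12\sum r_\delta=3$ is immediate from the vectors (each sums to $6$); and (L3) requires computing, via (\ref{etaqorder}), the order $v_{1/c}(f)$ at each cusp $c\in\{1,2,3,4,6,8,12,24\}$ and verifying all are strictly positive, which by the last sentence of Theorem \ref{th:4_30} places $f$ in $S_3(\Gamma_0(24),\chi)$; equation (\ref{char}) with $s'=6$ then identifies $\chi=\chi_{_{-24}}$. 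Finally, computing the orders $v_{1/24}(f)$ (equivalently the orders of vanishing at $\infty$) for the six basis elements and observing that they are pairwise distinct gives linear independence of six elements in a $6$-dimensional space, hence a basis. Part (iii) is then immediate from the decomposition (\ref{1_1}).

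The main obstacle is purely computational bookkeeping rather than conceptual: one must correctly evaluate (\ref{etaqorder}) at all eight cusps for all six eta quotients (and likewise confirm the Eisenstein enumeration exhausts all primitive factorizations), and in particular verify that the six orders at $\infty$ are genuinely distinct so that linear independence holds. There is no subtlety beyond this; the structure of the proof is identical to that of Theorem \ref{th:8_1}, as the remark ``The rest of the theorems in this section can be proven similarly'' already signals. I would present the verification for one representative eta quotient in detail (as was done for $f(z)=\eta_{24}[0,3,0,-4,-5,2,16,-6](z)$ in Theorem \ref{th:8_1}) and assert the rest follow by the same computation.
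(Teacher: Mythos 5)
Your proposal is correct and matches the paper's intended argument exactly: the paper itself only states that this theorem ``can be proven similarly'' to Theorem \ref{th:8_1}, and your outline --- enumerating the primitive factorizations $\varepsilon\psi=\chi_{_{-24}}$ with $RLt\mid 24$ via \cite[Theorem 5.9]{stein}, verifying Ligozat's conditions and the character via (\ref{char}) for each eta quotient, using the distinct orders at $\infty$ against the dimension count for linear independence, and invoking (\ref{1_1}) for part (iii) --- is precisely that template specialized to $\chi_{_{-24}}$.
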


\section{Main Results: Sextenary Quadratic Forms}\label{sec:9_1}
In Section \ref{sec:2}, we determined that the generating functions of (\ref{9_3}) are modular forms, whose corresponding spaces are given by Theorem \ref{th:9_1}. In Section \ref{sec:8_1} we constructed the bases for all these modular form spaces. In this section we use those bases obtained to state Theorems \ref{th:9_2} and \ref{th:9_3}, which combined with the Tables \ref{table:9_1}--\ref{table:9_4}, give the desired formulas for the numbers of representations of positive integers by diagonal sextenary quadratic forms with coefficients $1$, $2$, $3$ and $6$. Note that $S_j$ stands for the $j$th element in the ordered set $S$.

\begin{thm} \label{th:9_2}
Let $l_d \in \nn_0 $ {\rm ($d \mid 6$)} and $ l_1+l_2+l_3+l_6=6$. Then we have
\begin{align}
& \prod_{d \mid 6} \varphi^{l_d}(d z) \nonumber \\
& = \left\{\begin{array}{l}
\ds \sum_{t \mid 6} a_t E_{3,\chi_{_{-4}},\chi_{_{1}}}(tz)+\sum_{t \mid 6} b_t E_{3,\chi_{_{1}},\chi_{_{-4}}}(tz)+\sum_{j=1}^{4} c_j S(3,24,\chi_{_{-4}})_j \\
\ds \qquad\qquad\qquad\qquad\qquad \mbox{if $ l_1+l_3 \equiv 0 \pmd{2} $ and $ l_3+l_6 \equiv 0 \pmd{2} $,} \\
\ds \sum_{t \mid 8} d_t E_{3,\chi_{_{-3}},\chi_{_{1}}}(tz)+ \sum_{t \mid 8} e_t E_{3,\chi_{_{1}},\chi_{_{-3}}}(tz)+\sum_{j=1}^{4} f_j S(3,24,\chi_{_{-3}})_j \\
\ds \qquad\qquad\qquad\qquad\qquad \mbox{if $ l_1+l_3 \equiv 0 \pmd{2} $ and $ l_3+l_6 \equiv 1 \pmd{2} $, } \\
\ds \sum_{t \mid 3} g_t E_{3,\chi_{_{-8}},\chi_{_{1}}}(tz)+\sum_{t \mid 3} h_t E_{3,\chi_{_{1}},\chi_{_{-8}}}(tz)+\sum_{j=1}^{6} k_j S(3,24,\chi_{_{-8}})_j\\
\ds \qquad\qquad\qquad\qquad\qquad \mbox{if $ l_1+l_3 \equiv 1 \pmd{2} $ and $ l_3+l_6 \equiv 0 \pmd{2} $,} \\
\ds m_1 E_{3,\chi_{_{-24}},\chi_{_{1}}}(z) + m_2 E_{3,\chi_{_{1}},\chi_{_{-24}}}(z)+ m_3 E_{3,\chi_{_{-3}},\chi_{_{8}}}(z) +m_4 E_{3,\chi_{_{8}},\chi_{_{-3}}}(z) \\
\qquad +\sum_{j=1}^{6} n_j S(3,24,\chi_{_{-24}})_j\\
\ds \qquad\qquad\qquad\qquad\qquad \mbox{if $ l_1+l_3 \equiv 1 \pmd{2} $ and $ l_3+l_6 \equiv 1 \pmd{2} $, } 
\end{array} \right. \label{9_1}
\end{align}
where the values $a_t$, $b_t$, $c_j$; $d_t$, $e_t$, $f_j$; $g_t$, $h_t$, $k_j$; $m_i$, $n_j$ are given in {\em Tables \ref{table:9_1} -- \ref{table:9_4}}, respectively.
\end{thm}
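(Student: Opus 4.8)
The plan is to combine the structural results already established with straightforward linear algebra. By Theorem \ref{th:9_1}, for each admissible tuple $(l_1,l_2,l_3,l_6)$ with $l_1+l_2+l_3+l_6=6$, the generating function $\prod_{d\mid 6}\varphi^{l_d}(dz)$ lies in one of the four spaces $M_3(\Gamma_0(24),\chi)$ with $\chi\in\{\chi_{_{-4}},\chi_{_{-3}},\chi_{_{-8}},\chi_{_{-24}}\}$, the case being dictated by the parities of $l_1+l_3$ and $l_3+l_6$. By Theorem \ref{th:8_1} and the analogous Theorems \ref{th:8_2}--\ref{th:1}, each of these spaces has an explicit basis consisting of the listed Eisenstein series together with the ordered set of eta quotients $S(3,24,\chi)$. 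Hence, in each of the four cases, $\prod_{d\mid 6}\varphi^{l_d}(dz)$ is a unique linear combination of these basis elements, which is exactly the shape asserted in \eqref{9_1}; it remains only to pin down the coefficients.

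First I would, for a fixed tuple, expand both sides as $q$-series. The left-hand side is expanded using $\varphi(z)=1+2q+2q^4+2q^9+\cdots$ (equivalently via the eta-quotient expression \eqref{1_50} and \eqref{9_4}), and the basis elements on the right-hand side are expanded using the product formula for $\eta(z)$ for the cusp forms and the definition \eqref{3_2} for the Eisenstein series. Comparing coefficients of $q^0,q^1,q^2,\ldots$ up to a bound equal to the dimension of the relevant space (which is $2\cdot\#\{\text{Eisenstein generators}\}$ plus $4$ or $6$ for the cusp part, so at most on the order of $10$) yields a square linear system for the unknown coefficients; since the basis elements are linearly independent (the eta quotients in each $S(3,24,\chi)$ were chosen to have pairwise distinct orders at $\infty$, and the Eisenstein part is independent of the cusp part by \eqref{1_1}), this system has a unique solution. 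Because the space is finite-dimensional and the matching is of two holomorphic modular forms of the same weight and level, agreement of finitely many initial Fourier coefficients forces agreement of all of them; this is the standard ``Sturm bound'' type argument, and here it suffices to check coefficients up to $\dim M_3(\Gamma_0(24),\chi)$.

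The output of this computation for all $84$ tuples is precisely the content of Tables \ref{table:9_1}--\ref{table:9_4}: each row records the tuple $(l_1,l_2,l_3,l_6)$, the applicable character, and the resulting coefficients $a_t,b_t,c_j$ (resp. $d_t,e_t,f_j$; $g_t,h_t,k_j$; $m_i,n_j$). So the proof reduces to: (1) invoke Theorem \ref{th:9_1} to place the generating function in the correct space; (2) invoke the basis theorems of Section \ref{sec:8_1}; (3) solve the finite linear system by comparing Fourier coefficients; and (4) verify that the tabulated coefficients are the solution. Steps (1) and (2) are already done; step (3)--(4) is a finite verification best carried out (and checked) with a computer algebra system.

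The main obstacle is purely computational bookkeeping rather than conceptual: one must correctly expand a moderately large number of eta quotients and Eisenstein series as $q$-series to sufficient precision and solve the corresponding systems accurately for all $84$ cases. There is no subtlety in the argument itself once the spaces and bases are fixed; the only points requiring care are ensuring the Fourier expansions are carried far enough to determine the coefficients uniquely (i.e. past the dimension of the space) and transcribing the resulting data into the tables without error.
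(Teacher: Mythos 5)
Your proposal matches the paper's proof essentially verbatim: the paper likewise cites the decomposition \eqref{1_1} together with Theorems \ref{th:9_1} and \ref{th:8_1}--\ref{th:1} to justify the shape of \eqref{9_1}, and then determines the tabulated coefficients by comparing the first several Fourier coefficients of both sides, with the computations carried out in MAPLE. The only difference is that you spell out the finiteness/uniqueness justification (linear independence of the basis and a Sturm-bound-style truncation) a bit more explicitly than the paper does, which is a harmless elaboration rather than a different route.
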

\begin{proof}
Appealing to (\ref{1_1}) and Theorems \ref{th:8_1}--\ref{th:1} we deduce the linear combinations given by (\ref{9_1}). We determine the values of $a_t$, $b_t$, $c_j$, $d_t$, $e_t$, $f_j$, $g_t$, $h_t$, $k_j$, $m_i$, $n_j$ by comparing the first few coefficients of the Fourier series expansions of both sides in (\ref{9_1}). Solutions to resulting equations generate Tables \ref{table:9_1}--\ref{table:9_4}. We use MAPLE to perform the calculations.
\end{proof}
We compare the coefficients of $q^n$ ($n>0$) in the equations given by Theorem \ref{th:9_2} to obtain the following theorem.
\begin{thm}\label{th:9_3}
Let $l_d \in \nn_0 $ {\rm ($d \mid 6$)} and $ l_1+l_2+l_3+l_6=6$. Then for $n>0$ we have
\begin{align*}
& N(1^{l_1},2^{l_2},3^{l_3},6^{l_6}; n) \\
& = \left\{\begin{array}{l}
\ds \sum_{t \mid 6} a_t \sigma_{(2,\chi_{_{-4}},\chi_{_{1}})}(n/t)+\sum_{t \mid 6} b_t \sigma_{(2,\chi_{_{1}},\chi_{_{-4}})}(n/t) +\sum_{j=1}^{4} c_j [n] S(3,24,\chi_{_{-4}})_j \\
\ds \qquad\qquad\qquad\qquad\qquad\qquad \mbox{if $ l_1+l_3 \equiv 0 \pmd{2} $ and $ l_3+l_6 \equiv 0 \pmd{2} $,} \\
\ds \sum_{t \mid 8} d_t \sigma_{(2,\chi_{_{-3}},\chi_{_{1}})}(n/t)+ \sum_{t \mid 8} e_t \sigma_{(2,\chi_{_{1}},\chi_{_{-3}})}(n/t)+\sum_{j=1}^{4} f_j [n] S(3,24,\chi_{_{-3}})_j \\
\ds \qquad\qquad\qquad\qquad\qquad\qquad \mbox{if $ l_1+l_3 \equiv 0 \pmd{2} $ and $ l_3+l_6 \equiv 1 \pmd{2} $, } \\
\ds \sum_{t \mid 3} g_t \sigma_{(2,\chi_{_{-8}},\chi_{_{1}})}(n/t)+\sum_{t \mid 3} h_t \sigma_{(2,\chi_{_{1}},\chi_{_{-8}})}(n/t)+\sum_{j=1}^{6} k_j [n] S(3,24,\chi_{_{-8}})_j \\
\ds \qquad\qquad\qquad\qquad\qquad\qquad \mbox{if $ l_1+l_3 \equiv 1 \pmd{2} $ and $ l_3+l_6 \equiv 0 \pmd{2} $,} \\
\ds m_1 \sigma_{(2,\chi_{_{-24}},\chi_{_{1}})}(n) + m_2 \sigma_{(2,\chi_{_{1}},\chi_{_{-24}})}(n)+ m_3 \sigma_{(2,\chi_{_{-3}},\chi_{_{8}})}(n) +m_4 \sigma_{(2,\chi_{_{8}},\chi_{_{-3}})}(n) \\
\ds \qquad +\sum_{j=1}^{6} n_j [n] S(3,24,\chi_{_{-24}})_j\\
\ds \qquad\qquad\qquad\qquad\qquad\qquad \mbox{if $ l_1+l_3 \equiv 1 \pmd{2} $ and $ l_3+l_6 \equiv 1 \pmd{2} $, } 
\end{array} \right.
\end{align*}
where the values $a_t$, $b_t$, $c_j$; $d_t$, $e_t$, $f_j$; $g_t$, $h_t$, $k_j$; $m_i$, $n_j$ are given in {\em Tables \ref{table:9_1} -- \ref{table:9_4}}, respectively.
\end{thm}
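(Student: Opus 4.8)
The proof of Theorem \ref{th:9_3} is essentially immediate once Theorem \ref{th:9_2} is in hand, so the plan is to extract the Fourier coefficients on both sides of the identity \eqref{9_1}. First I would recall that the generating function $\prod_{d \mid 6} \varphi^{l_d}(dz)$ has Fourier expansion $\sum_{n \geq 0} N(1^{l_1},2^{l_2},3^{l_3},6^{l_6}; n)\, q^n$ by the very definition of $\varphi$ and of the representation numbers. Then, for each of the four cases distinguished by the parities of $l_1+l_3$ and $l_3+l_6$ (equivalently, by the character $\chi$ determined in Theorem \ref{th:9_1}), I would read off the coefficient of $q^n$ for $n > 0$ in the right-hand side of \eqref{9_1}.

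The key observation is that each term on the right decomposes coefficient-wise in a transparent way. For the Eisenstein parts, the definition \eqref{3_2} gives that the coefficient of $q^n$ in $E_{3,\chi,\psi}(z)$ is $\sigma_{(2,\chi,\psi)}(n)$, and replacing $z$ by $tz$ sends $q^n \mapsto q^{tn}$, so the coefficient of $q^n$ in $E_{3,\chi,\psi}(tz)$ is $\sigma_{(2,\chi,\psi)}(n/t)$, with the convention (from the text following \eqref{3_1}) that this vanishes when $t \nmid n$. For the cusp parts, I would simply denote by $[n]S(3,24,\chi)_j$ the coefficient of $q^n$ in the $j$th basis eta quotient of $S(3,24,\chi)$, which is the notation already used in the statement; these are explicit integers computable from the eta-product expansions. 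Summing these contributions with the scalars $a_t, b_t, c_j$, etc.\ from Tables \ref{table:9_1}--\ref{table:9_4} yields precisely the claimed formula in each of the four cases.

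There is no real obstacle here: the content of the theorem is entirely carried by Theorem \ref{th:9_2}, whose proof (comparing finitely many Fourier coefficients and solving the resulting linear systems, as indicated) is where the genuine work lies. The only point requiring a word of care is the constant term: \eqref{9_1} is an identity of modular forms including the $q^0$ coefficient, but Theorem \ref{th:9_3} is stated only for $n > 0$, so I would note that comparing coefficients of $q^n$ for each fixed $n > 0$ is legitimate and that the $n=0$ term (involving $c_0$ from \eqref{3_2}) is simply omitted from the stated range. With that remark, the proof is a one-line consequence: \emph{equate the coefficients of $q^n$, $n>0$, on both sides of the identity in Theorem \ref{th:9_2}, using \eqref{3_2} for the Eisenstein contributions and the definition of $[n]S(3,24,\chi)_j$ for the cusp contributions.}
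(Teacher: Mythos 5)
Your proposal is correct and matches the paper exactly: the paper simply states that Theorem \ref{th:9_3} follows by comparing the coefficients of $q^n$ ($n>0$) in the identities of Theorem \ref{th:9_2}, which is precisely your argument. Your additional remarks on the convention $\sigma_{(2,\chi,\psi)}(n/t)=0$ for $t\nmid n$ and on omitting the constant term are accurate but not needed beyond what the paper records.
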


{\small \begin{center}
\renewcommand*{\arraystretch}{1.2} \begin{longtable}{r r r r | c c c c c c c c c c c c  }
\caption{Values of $l_d$, $a_t$, $b_t$, $c_j$ for Theorems \ref{th:9_2} and \ref{th:9_3} } \label{table:9_1} \\
\hline\hline 
$l_1$ & $l_2$ & $l_3$ & $l_6$ & $a_1$ & $a_2$ & $a_3$ & $a_6$ & $b_1$ & $b_2$ & $b_3$ & $b_6$ & $c_1$ & $c_2$ & $c_3$ & $c_4$ \\ 
\hline
\endfirsthead
\multicolumn{16}{c}
{\tablename\ \thetable\ -- \textit{Continued from previous page}} \\
\hline
\hline 
$l_1$ & $l_2$ & $l_3$ & $l_6$ & $a_1$ & $a_2$ & $a_3$ & $a_6$ & $b_1$ & $b_2$ & $b_3$ & $b_6$ & $c_1$ & $c_2$ & $c_3$ & $c_4$ \\ 
\hline
\endhead
\hline \multicolumn{16}{r}{\textit{Continued on next page}} \\
\endfoot
\hline
\endlastfoot
$6$ & $ 0$ & $ 0$ & $ 0$ & $ -4$ & $ 0$ & $ 0$ & $ 0$ & $ 16$ & $ 0$ & $ 0$ & $ 0$ & $ 0$ & $ 0$ & $ 0$ & $ 0$ \\
 $4$ & $ 2$ & $ 0$ & $ 0$ & $ 0$ & $ -4$ & $ 0$ & $ 0$ & $ 8$ & $ 0$ & $ 0$ & $ 0$ & $ 0$ & $ 0$ & $ 0$ & $ 0$ \\
 $4$ & $ 0$ & $ 2$ & $ 0$ & $  \frac{8}{7}$ & $ 0$ & $ \frac{-36}{7}$ & $ 0$ & $  \frac{32}{7}$ & $ 0$ & $  \frac{144}{7}$ & $ 0$ & $  \frac{16}{7}$ & $  \frac{48}{7}$ & $ \frac{-48}{7}$ & $ \frac{-64}{7}$ \\
 $4$ & $ 0$ & $ 0$ & $ 2$ & $ 0$ & $  \frac{8}{7}$ & $ 0$ & $ \frac{-36}{7}$ & $  \frac{16}{7}$ & $ 0$ & $  \frac{72}{7}$ & $ 0$ & $  \frac{40}{7}$ & $  \frac{136}{7}$ & $  \frac{24}{7}$ & $  \frac{96}{7}$ \\
 $3$ & $ 1$ & $ 1$ & $ 1$ & $ 0$ & $ \frac{-10}{7}$ & $ 0$ & $ \frac{-18}{7}$ & $  \frac{20}{7}$ & $ 0$ & $ \frac{-36}{7}$ & $ 0$ & $  \frac{22}{7}$ & $  \frac{50}{7}$ & $  \frac{30}{7}$ & $  \frac{40}{7}$ \\
 $2$ & $ 4$ & $ 0$ & $ 0$ & $ 0$ & $ -4$ & $ 0$ & $ 0$ & $ 4$ & $ 0$ & $ 0$ & $ 0$ & $ 0$ & $ 0$ & $ 0$ & $ 0$ \\
 $2$ & $ 2$ & $ 2$ & $ 0$ & $ 0$ & $  \frac{8}{7}$ & $ 0$ & $ \frac{-36}{7}$ & $  \frac{16}{7}$ & $ 0$ & $  \frac{72}{7}$ & $ 0$ & $  \frac{12}{7}$ & $ \frac{-4}{7}$ & $ \frac{-60}{7}$ & $ \frac{-16}{7}$ \\
 $2$ & $ 2$ & $ 0$ & $ 2$ & $ 0$ & $  \frac{8}{7}$ & $ 0$ & $ \frac{-36}{7}$ & $  \frac{8}{7}$ & $ 0$ & $  \frac{36}{7}$ & $ 0$ & $  \frac{20}{7}$ & $  \frac{36}{7}$ & $ \frac{ 12}{7}$ & $  \frac{64}{7}$ \\
 $2$ & $ 0$ & $ 4$ & $ 0$ & $ \frac{-4}{7}$ & $ 0$ & $ \frac{-24}{7}$ & $ 0$ & $  \frac{16}{7}$ & $ 0$ & $ \frac{-96}{7}$ & $ 0$ & $  \frac{16}{7}$ & $ \frac{ -16}{7}$ & $  \frac{16}{7}$ & $ \frac{-64}{7}$ \\
 $2$ & $ 0$ & $ 2$ & $ 2$ & $ 0$ & $ \frac{-4}{7}$ & $ 0$ & $ \frac{-24}{7}$ & $  \frac{8}{7}$ & $ 0$ & $ \frac{-48}{7}$ & $ 0$ & $  \frac{20}{7}$ & $  \frac{20}{7}$ & $  \frac{12}{7}$ & $  \frac{16}{7}$ \\
 $2$ & $ 0$ & $ 0$ & $ 4$ & $ 0$ & $ \frac{-4}{7}$ & $ 0$ & $ \frac{-24}{7}$ & $  \frac{4}{7}$ & $ 0$ & $ \frac{-24}{7}$ & $ 0$ & $  \frac{24}{7}$ & $ \frac{ 40}{7}$ & $ \frac{ -8}{7}$ & $ 0$ \\
 $1$ & $ 3$ & $ 1$ & $ 1$ & $ 0$ & $ \frac{-10}{7}$ & $ 0$ & $ \frac{-18}{7}$ & $  \frac{10}{7}$ & $ 0$ & $ \frac{-18}{7}$ & $ 0$ & $  \frac{4}{7}$ & $  \frac{16}{7}$ & $  \frac{36}{7}$ & $ 0$ \\
 $1$ & $ 1$ & $ 3$ & $ 1$ & $ 0$ & $  \frac{2}{7}$ & $ 0$ & $ \frac{-30}{7}$ & $  \frac{4}{7}$ & $ 0$ & $  \frac{60}{7}$ & $ 0$ & $  \frac{10}{7}$ & $ \frac{ 6}{7}$ & $ \frac{-22}{7}$ & $  \frac{24}{7}$ \\
 $1$ & $ 1$ & $ 1$ & $ 3$ & $ 0$ & $  \frac{2}{7}$ & $ 0$ & $ \frac{-30}{7}$ & $  \frac{2}{7}$ & $ 0$ & $  \frac{30}{7}$ & $ 0$ & $  \frac{12}{7}$ & $  \frac{16}{7}$ & $ \frac{-4}{7}$ & $  \frac{16}{7}$ \\
 $0$ & $ 6$ & $ 0$ & $ 0$ & $ 0$ & $ -4$ & $ 0$ & $ 0$ & $ 0$ & $ 16$ & $ 0$ & $ 0$ & $ 0$ & $ 0$ & $ 0$ & $ 0$ \\
 $0$ & $ 4$ & $ 2$ & $ 0$ & $ 0$ & $  \frac{8}{7}$ & $ 0$ & $ \frac{-36}{7}$ & $  \frac{8}{7}$ & $ 0$ & $  \frac{36}{7}$ & $ 0$ & $ \frac{-8}{7}$ & $  \frac{8}{7}$ & $ \frac{-72}{7}$ & $  \frac{64}{7}$ \\
 $0$ & $ 4$ & $ 0$ & $ 2$ & $ 0$ & $  \frac{8}{7}$ & $ 0$ & $ \frac{-36}{7}$ & $ 0$ & $  \frac{32}{7}$ & $ 0$ & $  \frac{144}{7}$ & $ 0$ & $  \frac{16}{7}$ & $ 0$ & $  \frac{64}{7}$ \\
 $0$ & $ 2$ & $ 4$ & $ 0$ & $ 0$ & $ \frac{-4}{7}$ & $ 0$ & $ \frac{-24}{7}$ & $  \frac{8}{7}$ & $ 0$ & $ \frac{-48}{7}$ & $ 0$ & $ \frac{-8}{7}$ & $ \frac{-8}{7}$ & $  \frac{40}{7}$ & $ \frac{-96}{7}$ \\
 $0$ & $ 2$ & $ 2$ & $ 2$ & $ 0$ & $ \frac{-4}{7}$ & $ 0$ & $ \frac{-24}{7}$ & $  \frac{4}{7}$ & $ 0$ & $ \frac{-24}{7}$ & $ 0$ & $ \frac{-4}{7}$ & $  \frac{12}{7}$ & $  \frac{20}{7}$ & $ 0$ \\
 $0$ & $ 2$ & $ 0$ & $ 4$ & $ 0$ & $ \frac{-4}{7}$ & $ 0$ & $ \frac{-24}{7}$ & $ 0$ & $  \frac{16}{7}$ & $ 0$ & $ \frac{-96}{7}$ & $ 0$ & $  \frac{16}{7}$ & $ 0$ & $ 0$ \\
 $0$ & $ 0$ & $ 6$ & $ 0$ & $ 0$ & $ 0$ & $ -4$ & $ 0$ & $ 0$ & $ 0$ & $ 16$ & $ 0$ & $ 0$ & $ 0$ & $ 0$ & $ 0$ \\
 $0$ & $ 0$ & $ 4$ & $ 2$ & $ 0$ & $ 0$ & $ 0$ & $ -4$ & $ 0$ & $ 0$ & $ 8$ & $ 0$ & $ 0$ & $ 0$ & $ 0$ & $ 0$ \\
 $0$ & $ 0$ & $ 2$ & $ 4$ & $ 0$ & $ 0$ & $ 0$ & $ -4$ & $ 0$ & $ 0$ & $ 4$ & $ 0$ & $ 0$ & $ 0$ & $ 0$ & $ 0$ \\
 $0$ & $ 0$ & $ 0$ & $ 6$ & $ 0$ & $ 0$ & $ 0$ & $ -4$ & $ 0$ & $ 0$ & $ 0$ & $ 16$ & $ 0$ & $ 0$ & $ 0$ & $ 0$
\end{longtable}
\end{center} }

{\small \begin{center}
\renewcommand*{\arraystretch}{1.2} \begin{longtable}{r r r r | c c c c c c c c c c c c }
\caption{Values of $l_d$, $d_t$, $e_t$, $f_j$ for Theorems \ref{th:9_2} and \ref{th:9_3} }\label{table:9_2} \\
\hline\hline 
$l_1$ & $l_2$ & $l_3$ & $l_6$ & $d_1$ & $d_2$ & $d_{4}$ & $d_8$ & $e_1$ & $e_2$ & $e_4$ & $e_8$ & $f_1$ & $f_2$ & $f_3$ & $f_4$ \\ 
\hline
\endfirsthead
\multicolumn{16}{c}
{\tablename\ \thetable\ -- \textit{Continued from previous page}} \\
\hline
\hline 
$l_1$ & $l_2$ & $l_3$ & $l_6$ & $d_1$ & $d_2$ & $d_{4}$ & $d_8$ & $e_1$ & $e_2$ & $e_4$ & $e_8$ & $f_1$ & $f_2$ & $f_3$ & $f_4$ \\ 
\hline
\endhead
\hline \multicolumn{16}{r}{\textit{Continued on next page}} \\
\endfoot
\hline
\endlastfoot
$5$ & $ 0$ & $ 1$ & $ 0$ & $ 1$ & $ -2$ & $ -8$ & $ 0$ & $ 9$ & $ 18$ & $ -72$ & $ 0$ & $ 0$ & $ 0$ & $ 0$ & $ 0$ \\
 $4$ & $ 1$ & $ 0$ & $ 1$ & $ \frac{-1}{2}$ & $ \frac{1}{2}$ & $ -1$ & $ -8$ & $ \frac{9}{2}$ & $ \frac{9}{2}$ & $ 9$ & $ -72$ & $ 4$ & $ 6$ & $ 26$ & $ 16$ \\
 $3$ & $ 2$ & $ 1$ & $ 0$ & $ \frac{1}{2}$ & $ \frac{-1}{2}$ & $ -1$ & $ -8$ & $ \frac{9}{2}$ & $ \frac{9}{2}$ & $ -9$ & $ 72$ & $ 1$ & $ 0$ & $ -4$ & $ 4$ \\
 $3$ & $ 0$ & $ 3$ & $ 0$ & $ -1$ & $ 0$ & $ -8$ & $ 0$ & $ 3$ & $ 0$ & $ 24$ & $ 0$ & $ 4$ & $ 0$ & $ 0$ & $ -16$ \\
 $3$ & $ 0$ & $ 1$ & $ 2$ & $ \frac{-1}{2}$ & $ \frac{1}{2}$ & $ -1$ & $ -8$ & $ \frac{3}{2}$ & $ \frac{3}{2}$ & $ 3$ & $ -24$ & $ 5$ & $ 4$ & $ 16$ & $ 4$ \\
 $2$ & $ 3$ & $ 0$ & $ 1$ & $ \frac{-1}{4}$ & $ \frac{1}{4}$ & $ -1$ & $ -8$ & $ \frac{9}{4}$ & $ \frac{9}{4}$ & $ 9$ & $ -72$ & $ 2$ & $ 0$ & $ 10$ & $ 8$ \\
 $2$ & $ 1$ & $ 2$ & $ 1$ & $ \frac{1}{2}$ & $ \frac{-1}{2}$ & $ -1$ & $ -8$ & $ \frac{3}{2}$ & $\frac{ 3}{2}$ & $ -3$ & $ 24$ & $ 2$ & $ 2$ & $ 6$ & $ 8$ \\
 $2$ & $ 1$ & $ 0$ & $ 3$ & $ \frac{1}{4}$ & $ \frac{-1}{4}$ & $ -1$ & $ -8$ & $ \frac{3}{4}$ & $ \frac{3}{4}$ & $ -3$ & $ 24$ & $ 3$ & $ 4$ & $ 14$ & $ 4$ \\
 $1$ & $ 4$ & $ 1$ & $ 0$ & $ \frac{1}{4}$ & $ \frac{-1}{4}$ & $ -1$ & $ -8$ & $ \frac{9}{4}$ & $ \frac{9}{4}$ & $ -9$ & $ 72$ & $ \frac{-1}{2}$ & $ 0$ & $ -4$ & $ 10$ \\
 $1$ & $ 2$ & $ 3$ & $ 0$ & $ \frac{-1}{2}$ & $ \frac{1}{2}$ & $ -1$ & $ -8$ & $ \frac{3}{2}$ & $ \frac{3}{2}$ & $ 3$ & $ -24$ & $ 1$ & $ -4$ & $ 0$ & $ -12$ \\
 $1$ & $ 2$ & $ 1$ & $ 2$ & $ \frac{-1}{4}$ & $ \frac{1}{4}$ & $ -1$ & $ -8$ & $ \frac{3}{4}$ & $ \frac{3}{4}$ & $ 3$ & $ -24$ & $\frac{ 3}{2}$ & $ 0$ & $ 8$ & $ 2$ \\
 $1$ & $ 0$ & $ 5$ & $ 0$ & $ 1$ & $ -2$ & $ -8$ & $ 0$ & $ 1$ & $ 2$ & $ -8$ & $ 0$ & $ 0$ & $ 0$ & $ 0$ & $ 0$ \\
 $1$ & $ 0$ & $ 3$ & $ 2$ & $ \frac{1}{2}$ & $ \frac{-1}{2}$ & $ -1$ & $ -8$ & $ \frac{1}{2}$ & $ \frac{1}{2}$ & $ -1$ & $ 8$ & $ 1$ & $ 0$ & $ 4$ & $ 4$ \\
 $1$ & $ 0$ & $ 1$ & $ 4$ & $ \frac{1}{4}$ & $ \frac{-1}{4}$ & $ -1$ & $ -8$ & $ \frac{1}{4}$ & $ \frac{1}{4}$ & $ -1$ & $ 8$ & $ \frac{3}{2}$ & $ 0$ & $ 4$ & $ 2$ \\
 $0$ & $ 5$ & $ 0$ & $ 1$ & $ 0$ & $ 1$ & $ -2$ & $ -8$ & $ 0$ & $ 9$ & $ 18$ & $ -72$ & $ 0$ & $ 0$ & $ 0$ & $ 0$ \\
 $0$ & $ 3$ & $ 2$ & $ 1$ & $ \frac{1}{4}$ & $ \frac{-1}{4}$ & $ -1$ & $ -8$ & $ \frac{3}{4}$ & $ \frac{3}{4}$ & $ -3$ & $ 24$ & $ -1$ & $ 4$ & $ -2$ & $ 4$ \\
 $0$ & $ 3$ & $ 0$ & $ 3$ & $ 0$ & $ -1$ & $ 0$ & $ -8$ & $ 0$ & $ 3$ & $ 0$ & $ 24$ & $ 0$ & $ 4$ & $ 4$ & $ 0$ \\
 $0$ & $ 1$ & $ 4$ & $ 1$ & $ \frac{-1}{2}$ & $ \frac{1}{2}$ & $ -1$ & $ -8$ & $ \frac{1}{2}$ & $ \frac{1}{2}$ & $ 1$ & $ -8$ & $ 0$ & $ -2$ & $ 2$ & $ 0$ \\
 $0$ & $ 1$ & $ 2$ & $ 3$ & $ \frac{-1}{4}$ & $ \frac{1}{4}$ & $ -1$ & $ -8$ & $ \frac{1}{4}$ & $ \frac{1}{4}$ & $ 1$ & $ -8$ & $ 0$ & $ 0$ & $ 2$ & $ 0$ \\
 $0$ & $ 1$ & $ 0$ & $ 5$ & $ 0$ & $ 1$ & $ -2$ & $ -8$ & $ 0$ & $ 1$ & $ 2$ & $ -8$ & $ 0$ & $ 0$ & $ 0$ & $ 0$
\end{longtable}
\end{center} }

{\small \begin{center}
\renewcommand*{\arraystretch}{1.2} \begin{longtable}{r r r r | c c c c c c c c c c }
\caption{ Values of $l_d$, $g_t$, $h_{t}$, $k_j$ for Theorems \ref{th:9_2} and \ref{th:9_3} } \label{table:9_3} \\
\hline\hline 
$l_1$ & $l_2$ & $l_3$ & $l_6$ & $g_1$ & $g_3$ & $h_{1}$ & $h_3$ & $k_1$ & $k_2$ & $k_3$ & $k_4$ & $k_5$ & $k_6$ \\ 
\hline
\endfirsthead
\multicolumn{14}{c}
{\tablename\ \thetable\ -- \textit{Continued from previous page}} \\
\hline
\hline 
$l_1$ & $l_2$ & $l_3$ & $l_6$ & $g_1$ & $g_3$ & $h_{1}$ & $h_3$ & $k_1$ & $k_2$ & $k_3$ & $k_4$ & $k_5$ & $k_6$ \\ 
\hline
\endhead
\hline \multicolumn{14}{r}{\textit{Continued on next page}} \\
\endfoot
\hline
\endlastfoot
$5$ & $ 1$ & $ 0$ & $ 0$ & $ \frac{ -2}{3}$ & $ 0$ & $ \frac{ 32}{3}$ & $ 0$ & $ 0$ & $ 0$ & $ 0$ & $ 0$ & $ 0$ & $ 0$ \\
 $4$ & $ 0$ & $ 1$ & $ 1$ & $ \frac{ -8}{39}$ & $ \frac{ -6}{13}$ & $ \frac{ 128}{39}$ & $ \frac{ 96}{13}$ & $ \frac{ 64}{13}$ & $ \frac{ 272}{13}$ & $ \frac{ 160}{13}$ & $ \frac{ 576}{13}$ & $ 0$ & $ \frac{ 64}{13}$ \\
 $3$ & $ 3$ & $ 0$ & $ 0$ & $ \frac{ -2}{3}$ & $  0$ & $ \frac{ 16}{3}$ & $  0$ & $ \frac{ 4}{3}$ & $ 0$ & $ -4$ & $ \frac{ -16}{3}$ & $ \frac{ 16}{3}$ & $ \frac{ 16}{3}$ \\
 $3$ & $ 1$ & $ 2$ & $ 0$ & $ \frac{ 10}{39}$ & $ \frac{ -12}{13}$ & $ \frac{ 160}{39}$ & $ \frac{ -192}{13}$ & $ \frac{ 64}{39}$ & $ \frac{ 8}{13}$ & $ \frac{ -64}{13}$ & $ \frac{ -928}{39}$ & $ \frac{ -320}{39}$ & $ \frac{ -224}{39}$ \\
 $3$ & $ 1$ & $ 0$ & $ 2$ & $ \frac{ 10}{39}$ & $ \frac{ -12}{13}$ & $ \frac{ 80}{39}$ & $ \frac{ -96}{13}$ & $ \frac{ 48}{13}$ & $ \frac{ 168}{13}$ & $ \frac{ 188}{13}$ & $ \frac{ 576}{13}$ & $ \frac{ -80}{13}$ & $ \frac{ 80}{13}$ \\
 $2$ & $ 2$ & $ 1$ & $ 1$ & $ \frac{ -8}{39}$ & $ \frac{ -6}{13}$ & $ \frac{ 64}{39}$ & $ \frac{ 48}{13}$ & $ \frac{ 100}{39}$ & $ \frac{ 88}{13}$ & $ \frac{ 60}{13}$ & $ \frac{ 920}{39}$ & $ \frac{ 64}{39}$ & $ \frac{ 64}{39}$ \\
 $2$ & $ 0$ & $ 3$ & $ 1$ & $ \frac{ 4}{39}$ & $ \frac{ -10}{13}$ & $ \frac{ 64}{39}$ & $ \frac{ -160}{13}$ & $ \frac{ 88}{39}$ & $ \frac{ 24}{13}$ & $ \frac{ 16}{13}$ & $ \frac{ 128}{39}$ & $ \frac{ -128}{39}$ & $ \frac{ 160}{39}$ \\
 $2$ & $ 0$ & $ 1$ & $ 3$ & $ \frac{ 4}{39}$ & $ \frac{ -10}{13}$ & $ \frac{ 32}{39}$ & $ \frac{ -80}{13}$ & $ \frac{ 40}{13}$ & $ \frac{ 88}{13}$ & $ \frac{ 44}{13}$ & $ \frac{ 168}{13}$ & $ \frac{ -32}{13}$ & $ \frac{ 32}{13}$ \\
 $1$ & $ 5$ & $ 0$ & $ 0$ & $ \frac{ -2}{3}$ & $ 0$ & $ \frac{ 8}{3}$ & $  0$ & $ 0$ & $ 0$ & $ 0$ & $ 0$ & $ 0$ & $ 0$ \\
 $1$ & $ 3$ & $ 2$ & $ 0$ & $ \frac{ 10}{39}$ & $ \frac{ -12}{13}$ & $ \frac{ 80}{39}$ & $ \frac{ -96}{13}$ & $ \frac{ -4}{13}$ & $ \frac{ -40}{13}$ & $ \frac{ -20}{13}$ & $ \frac{ -256}{13}$ & $ \frac{ -80}{13}$ & $ \frac{ 80}{13}$ \\
 $1$ & $ 3$ & $ 0$ & $ 2$ & $ \frac{ 10}{39}$ & $ \frac{ -12}{13}$ & $ \frac{ 40}{39}$ & $ \frac{ -48}{13}$ & $ \frac{ 28}{39}$ & $ \frac{ 40}{13}$ & $ \frac{ 80}{13}$ & $ \frac{ 560}{39}$ & $ \frac{ 112}{39}$ & $ \frac{ 160}{39}$ \\
 $1$ & $ 1$ & $ 4$ & $ 0$ & $ \frac{ -2}{39}$ & $ \frac{ -8}{13}$ & $ \frac{ 32}{39}$ & $ \frac{ 128}{13}$ & $ \frac{ 16}{13}$ & $ \frac{ 16}{13}$ & $ \frac{ -64}{13}$ & $ \frac{ -64}{13}$ & $ 0$ & $ \frac{ -192}{13}$ \\
 $1$ & $ 1$ & $ 2$ & $ 2$ & $ \frac{ -2}{39}$ & $ \frac{ -8}{13}$ & $ \frac{ 16}{39}$ & $ \frac{ 64}{13}$ & $ \frac{ 64}{39}$ & $ \frac{ 48}{13}$ & $ \frac{ 28}{13}$ & $ \frac{ 464}{39}$ & $ \frac{ 16}{39}$ & $ \frac{ 16}{39}$ \\
 $1$ & $ 1$ & $ 0$ & $ 4$ & $ \frac{ -2}{39}$ & $ \frac{ -8}{13}$ & $ \frac{ 8}{39}$ & $ \frac{ 32}{13}$ & $ \frac{ 24}{13}$ & $ \frac{ 64}{13}$ & $ \frac{ 48}{13}$ & $ \frac{ 160}{13}$ & $ \frac{ -96}{13}$ & $ 0$ \\
 $0$ & $ 4$ & $ 1$ & $ 1$ & $ \frac{ -8}{39}$ & $ \frac{ -6}{13}$ & $ \frac{ 32}{39}$ & $ \frac{ 24}{13}$ & $ \frac{ -8}{13}$ & $ \frac{ 48}{13}$ & $ \frac{ -16}{13}$ & $ \frac{ 224}{13}$ & $ \frac{ 32}{13}$ & $ 0$ \\
 $0$ & $ 2$ & $ 3$ & $ 1$ & $ \frac{ 4}{39}$ & $ \frac{ -10}{13}$ & $ \frac{ 32}{39}$ & $ \frac{ -80}{13}$ & $ \frac{ -12}{13}$ & $ \frac{ -16}{13}$ & $ \frac{ 44}{13}$ & $ \frac{ -40}{13}$ & $ \frac{ -32}{13}$ & $ \frac{ 32}{13}$ \\
 $0$ & $ 2$ & $ 1$ & $ 3$ & $ \frac{ 4}{39}$ & $ \frac{ -10}{13}$ & $ \frac{ 16}{39}$ & $ \frac{ -40}{13}$ & $ \frac{ -20}{39}$ & $ \frac{ 16}{13}$ & $ \frac{ 32}{13}$ & $ \frac{ 224}{39}$ & $ \frac{ -80}{39}$ & $ \frac{ 64}{39}$ \\
 $0$ & $ 0$ & $ 5$ & $ 1$ & $ 0$ & $ \frac{ -2}{3}$ & $ 0$ & $ \frac{ 32}{3}$ & $ 0$ & $ 0$ & $ 0$ & $ 0$ & $ 0$ & $ 0$ \\
 $0$ & $ 0$ & $ 3$ & $ 3$ & $ 0$ & $ \frac{ -2}{3}$ & $ 0$ & $ \frac{ 16}{3}$ & $ 0$ & $ 0$ & $ \frac{ 4}{3}$ & $ \frac{ 8}{3}$ & $ 0$ & $ 0$ \\
 $0$ & $ 0$ & $ 1$ & $ 5$ & $ 0$ & $ \frac{ -2}{3}$ & $ 0$ & $ \frac{ 8}{3}$ & $ 0$ & $ 0$ & $ 0$ & $ 0$ & $ 0$ & $ 0$
\end{longtable}
\end{center} }

{\small \begin{center}
\renewcommand*{\arraystretch}{1.3} \begin{longtable}{r r r r | c c c c c c c c c c }
\caption{Values of $l_d$, $m_1$, $m_2$, $m_{3}$, $m_4$, $n_j$ for Theorems \ref{th:9_2} and \ref{th:9_3} }\label{table:9_4} \\
\hline\hline 
$l_1$ & $l_2$ & $l_3$ & $l_6$ & $m_1$ & $m_2$ & $m_{3}$ & $m_4$ & $n_1$ & $n_2$ & $n_3$ & $n_4$ & $n_5$ & $n_6$ \\ 
\hline
\endfirsthead
\multicolumn{14}{c}
{\tablename\ \thetable\ -- \textit{Continued from previous page}} \\
\hline
\hline 
$l_1$ & $l_2$ & $l_3$ & $l_6$ & $m_1$ & $m_2$ & $m_{3}$ & $m_4$ & $n_1$ & $n_2$ & $n_3$ & $n_4$ & $n_5$ & $n_6$\\ 
\hline
\endhead
\hline \multicolumn{14}{r}{\textit{Continued on next page}} \\
\endfoot
\hline
\endlastfoot
$5$ & $ 0$ & $ 0$ & $ 1$ & $ \frac{ -1}{23}$ & $ \frac{ 144}{23}$ & $ \frac{ 16}{23}$ & $ \frac{ -9}{23}$ & $ \frac{ 80}{23}$ & $ \frac{ 480}{23}$ & $ \frac{ 1600}{23}$ & $ 0$ & $ \frac{ 320}{23}$ & $ 0$ \\
 $4$ & $ 1$ & $ 1$ & $ 0$ & $ \frac{ -1}{23}$ & $ \frac{ 144}{23}$ & $ \frac{ -16}{23}$ & $ \frac{ 9}{23}$ & $ \frac{ 48}{23}$ & $ \frac{ 16}{23}$ & $ \frac{ 48}{23}$ & $ \frac{ -112}{23}$ & $ \frac{ -384}{23}$ & $ \frac{ 256}{23}$ \\
 $3$ & $ 2$ & $ 0$ & $ 1$ & $ \frac{ -1}{23}$ & $ \frac{ 72}{23}$ & $ \frac{ 8}{23}$ & $ \frac{ -9}{23}$ & $ \frac{ 68}{23}$ & $ \frac{ 172}{23}$ & $ \frac{ 496}{23}$ & $ \frac{ 80}{23}$ & $ \frac{ 272}{23}$ & $ \frac{ -80}{23}$ \\
 $3$ & $ 0$ & $ 2$ & $ 1$ & $ \frac{ -1}{23}$ & $ \frac{ 48}{23}$ & $ \frac{ -16}{23}$ & $ \frac{ 3}{23}$ & $ \frac{ 104}{23}$ & $ \frac{ 128}{23}$ & $ \frac{ 320}{23}$ & $ \frac{ 32}{23}$ & $ \frac{ 32}{23}$ & $ \frac{ 64}{23}$ \\
 $3$ & $ 0$ & $ 0$ & $ 3$ & $ \frac{ -1}{23}$ & $ \frac{ 24}{23}$ & $ \frac{ -8}{23}$ & $ \frac{ 3}{23}$ & $ \frac{ 120}{23}$ & $ \frac{ 272}{23}$ & $ \frac{ 772}{23}$ & $ \frac{ -132}{23}$ & $ \frac{ -16}{23}$ & $ \frac{ 48}{23}$ \\
 $2$ & $ 3$ & $ 1$ & $ 0$ & $ \frac{ -1}{23}$ & $ \frac{ 72}{23}$ & $ \frac{ -8}{23}$ & $ \frac{ 9}{23}$ & $ \frac{ 20}{23}$ & $ \frac{ -60}{23}$ & $ \frac{ -56}{23}$ & $ \frac{ 96}{23}$ & $ \frac{ -112}{23}$ & $ \frac{ 192}{23}$ \\
 $2$ & $ 1$ & $ 3$ & $ 0$ & $ \frac{ -1}{23}$ & $ \frac{ 48}{23}$ & $ \frac{ 16}{23}$ & $ \frac{ -3}{23}$ & $ \frac{ 32}{23}$ & $ \frac{ 40}{23}$ & $ \frac{ 24}{23}$ & $ \frac{ -312}{23}$ & $ \frac{ -288}{23}$ & $ 0$ \\
 $2$ & $ 1$ & $ 1$ & $ 2$ & $ \frac{ -1}{23}$ & $ \frac{ 24}{23}$ & $ \frac{ 8}{23}$ & $ \frac{ -3}{23}$ & $ \frac{ 64}{23}$ & $ \frac{ 136}{23}$ & $ \frac{ 396}{23}$ & $ \frac{ -52}{23}$ & $ \frac{ 80}{23}$ & $ \frac{ -32}{23}$ \\
 $1$ & $ 4$ & $ 0$ & $ 1$ & $ \frac{ -1}{23}$ & $ \frac{ 36}{23}$ & $ \frac{ 4}{23}$ & $ \frac{ -9}{23}$ & $ \frac{ 16}{23}$ & $ \frac{ 64}{23}$ & $ \frac{ 128}{23}$ & $ \frac{ -64}{23}$ & $ \frac{ 64}{23}$ & $ \frac{ 64}{23}$ \\
 $1$ & $ 2$ & $ 2$ & $ 1$ & $ \frac{ -1}{23}$ & $ \frac{ 24}{23}$ & $ \frac{ -8}{23}$ & $ \frac{ 3}{23}$ & $ \frac{ 28}{23}$ & $ \frac{ -4}{23}$ & $ \frac{ 128}{23}$ & $ \frac{ 144}{23}$ & $ \frac{ -16}{23}$ & $ \frac{ 48}{23}$ \\
 $1$ & $ 2$ & $ 0$ & $ 3$ & $ \frac{ -1}{23}$ & $ \frac{ 12}{23}$ & $ \frac{ -4}{23}$ & $ \frac{ 3}{23}$ & $ \frac{ 36}{23}$ & $ \frac{ 68}{23}$ & $ \frac{ 308}{23}$ & $ \frac{ 108}{23}$ & $ \frac{ 144}{23}$ & $ \frac{ -144}{23}$ \\
 $1$ & $  0$ & $ 4$ & $ 1$ & $ \frac{ -1}{23}$ & $ \frac{ 16}{23}$ & $ \frac{ 16}{23}$ & $ \frac{ -1}{23}$ & $ \frac{ 16}{23}$ & $ \frac{ 16}{23}$ & $ \frac{ 112}{23}$ & $ \frac{ -48}{23}$ & $ 0$ & $ 0$ \\
 $1$ & $ 0$ & $ 2$ & $ 3$ & $ \frac{ -1}{23}$ & $ \frac{ 8}{23}$ & $ \frac{ 8}{23}$ & $ \frac{ -1}{23}$ & $ \frac{ 32}{23}$ & $ \frac{ 32}{23}$ & $ \frac{ 148}{23}$ & $ \frac{ -4}{23}$ & $ \frac{ 16}{23}$ & $ \frac{ -16}{23}$ \\
 $1$ & $ 0$ & $ 0$ & $ 5$ & $ \frac{ -1}{23}$ & $ \frac{ 4}{23}$ & $ \frac{ 4}{23}$ & $ \frac{ -1}{23}$ & $ \frac{ 40}{23}$ & $ \frac{ 40}{23}$ & $ \frac{ 120}{23}$ & $ \frac{ -120}{23}$ & $ \frac{ -160}{23}$ & $ \frac{ 160}{23}$ \\
 $0$ & $ 5$ & $ 1$ & $ 0$ & $ \frac{ -1}{23}$ & $ \frac{ 36}{23}$ & $ \frac{ -4}{23}$ & $ \frac{ 9}{23}$ & $ \frac{ -40}{23}$ & $ \frac{ 40}{23}$ & $ \frac{ -200}{23}$ & $ \frac{ 200}{23}$ & $ \frac{ -160}{23}$ & $ \frac{ 160}{23}$ \\
 $0$ & $ 3$ & $ 3$ & $ 0$ & $ \frac{ -1}{23}$ & $ \frac{ 24}{23}$ & $ \frac{ 8}{23}$ & $ \frac{ -3}{23}$ & $ \frac{ -28}{23}$ & $ \frac{ 44}{23}$ & $ \frac{ -64}{23}$ & $ \frac{ -328}{23}$ & $ \frac{ 80}{23}$ & $ \frac{ -32}{23}$ \\
 $0$ & $ 3$ & $ 1$ & $ 2$ & $ \frac{ -1}{23}$ & $ \frac{ 12}{23}$ & $ \frac{ 4}{23}$ & $ \frac{ -3}{23}$ & $ \frac{ -12}{23}$ & $ 4$ & $ \frac{ 76}{23}$ & $ \frac{ -60}{23}$ & $ \frac{ 80}{23}$ & $ \frac{ -48}{23}$ \\
 $0$ & $ 1$ & $ 5$ & $ 0$ & $ \frac{ -1}{23}$ & $ \frac{ 16}{23}$ & $ \frac{ -16}{23}$ & $ \frac{ 1}{23}$ & $ 0$ & $ \frac{ -80}{23}$ & $ \frac{ -80}{23}$ & $ \frac{ 80}{23}$ & $ \frac{ -320}{23}$ & $ 0$ \\
 $0$ & $ 1$ & $ 3$ & $ 2$ & $ \frac{ -1}{23}$ & $ \frac{ 8}{23}$ & $ \frac{ -8}{23}$ & $ \frac{ 1}{23}$ & $ 0$ & $ \frac{ -16}{23}$ & $ \frac{ 36}{23}$ & $ \frac{ 68}{23}$ & $ \frac{ 16}{23}$ & $ 0$ \\
 $0$ & $ 1$ & $ 1$ & $ 4$ & $ \frac{ -1}{23}$ & $ \frac{ 4}{23}$ & $ \frac{ -4}{23}$ & $ \frac{ 1}{23}$ & $ 0$ & $ \frac{ 16}{23}$ & $ \frac{ 48}{23}$ & $ \frac{ 16}{23}$ & $ 0$ & $ 0$
\end{longtable}
\end{center} }

\section{Newforms in $M_3(\Gamma_0(24),\chi)$}\label{sec:5}
In this section we give another use of the bases provided in Section \ref{sec:8_1}. In \cite{martinono} Martin and Ono expressed all weight $2$ newforms that are eta quotients. Below we express the newforms in $M_3(\Gamma_0(24),\chi)$ in terms of eta quotients from Section \ref{sec:8_1}.
\begin{thm} Let
\beqars
& f_1(z):=& S(3,24,\chi_{_{-3}})_1 +3 S(3,24,\chi_{_{-3}})_3 +4 S(3,24,\chi_{_{-3}})_4\\
&&+\alpha_1 S(3,24,\chi_{_{-3}})_3 ,\\
& f_2(z):=& S(3,24,\chi_{_{-8}})_1+ 2S(3,24,\chi_{_{-8}})_2+2S(3,24,\chi_{_{-8}})_4 -2S(3,24,\chi_{_{-8}})_5 \\
&& \quad \qquad-4 S(3,24,\chi_{_{-8}})_6 \\
&& +\alpha_2 ( S(3,24,\chi_{_{-8}})_2+ 3/2S(3,24,\chi_{_{-8}})_3+ 5S(3,24,\chi_{_{-8}})_4 \\
&& \quad \qquad - S(3,24,\chi_{_{-8}})_5 + 3S(3,24,\chi_{_{-8}})_6 )\\
&& +\alpha^2_2 ( -1/2 S(3,24,\chi_{_{-8}})_3 - S(3,24,\chi_{_{-8}})_6 ) \\
&& +\alpha^3_2 ( 1/4 S(3,24,\chi_{_{-8}})_3+ 1/2S(3,24,\chi_{_{-8}})_4 -1/2S(3,24,\chi_{_{-8}})_5 \\
&& \quad \qquad + 1/2S(3,24,\chi_{_{-8}})_6 ), \\
& f_3(z):= & S(3,24,\chi_{_{-24}})_1 - S(3,24,\chi_{_{-24}})_2 + 3 S(3,24,\chi_{_{-24}})_3 + 7 S(3,24,\chi_{_{-24}})_4 \\
&& \quad \qquad + 8 S(3,24,\chi_{_{-24}})_5 - 4 S(3,24,\chi_{_{-24}})_6 , \\
& f_4(z):= & S(3,24,\chi_{_{-24}})_1 + 3 S(3,24,\chi_{_{-24}})_2 + 5 S(3,24,\chi_{_{-24}})_3 \\
&& \quad \qquad + S(3,24,\chi_{_{-24}})_4 - 4 S(3,24,\chi_{_{-24}})_6, \\
& f_5(z):= & S(3,24,\chi_{_{-24}})_1 + S(3,24,\chi_{_{-24}})_2 + S(3,24,\chi_{_{-24}})_3 \\
&& \quad \qquad -3 S(3,24,\chi_{_{-24}})_4 -6 S(3,24,\chi_{_{-24}})_5 + 6 S(3,24,\chi_{_{-24}})_6\\
&& +\alpha_3 \left( S(3,24,\chi_{_{-24}})_2 + 3/2 S(3,24,\chi_{_{-24}})_3 -1/2 S(3,24,\chi_{_{-24}})_4 \right. \\
&& \quad \qquad \left. + 3 S(3,24,\chi_{_{-24}})_5 \right)\\
&& +\alpha^2_3 ( - S(3,24,\chi_{_{-24}})_3 - S(3,24,\chi_{_{-24}})_5 + S(3,24,\chi_{_{-24}})_6 )\\
&& +\alpha^3_3 ( -1/4 S(3,24,\chi_{_{-24}})_3 -1/4 S(3,24,\chi_{_{-24}})_4 + 1/2S(3,24,\chi_{_{-24}})_5 ),
\eeqars
where $\alpha_1^2-2\alpha_1+9=0$, $\alpha_2^4  - 2 \alpha_2^3 + 6 \alpha_2^2 - 8 \alpha_2 + 16=0$ and $\alpha_3^4  + 6 \alpha_3^2 +16=0$. Then we have
\begin{align*}
 f_1(z)\in S^{new}_3(\Gamma_0(24),\chi_{_{-3}})\\
 f_2(z) \in S^{new}_3(\Gamma_0(24),\chi_{_{-8}})\\
 f_3(z), f_4(z), f_5(z) \in S^{new}_3(\Gamma_0(24),\chi_{_{-24}}).
 \end{align*}
\end{thm}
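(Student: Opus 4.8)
The plan is to verify, for each index, the three properties that characterise a newform: membership in the cuspidal space, being a normalised simultaneous eigenform for the Hecke operators $T_n$ with $\gcd(n,24)=1$, and lying in the new subspace. The first is immediate: each $f_i$ is by construction a $\mathbb{Q}(\alpha_i)$-linear combination of the ordered basis $S(3,24,\chi)$ of $S_3(\Gamma_0(24),\chi)$ produced in Theorems \ref{th:8_1}--\ref{th:1} (with $\chi=\chi_{_{-3}}$ for $f_1$, $\chi=\chi_{_{-8}}$ for $f_2$, and $\chi=\chi_{_{-24}}$ for $f_3,f_4,f_5$), so $f_i\in S_3(\Gamma_0(24),\chi)\otimes_{\mathbb{Q}}\mathbb{Q}(\alpha_i)$. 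Moreover, the first element of each basis set has order exactly $1$ at $\infty$ — by (\ref{etaqorder}) this order is $\frac{1}{24}\sum_{\delta\mid 24}\delta r_\delta=1$ — while the remaining elements have strictly larger order there; since the coefficient of the first element in every $f_i$ is $1$, each $f_i=q+O(q^2)$ is normalised.

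For the eigenform property I would, as elsewhere in the paper, compute with a computer algebra system the $q$-expansion of every eta quotient occurring in the $f_i$ out to a Sturm-type bound for $S_3(\Gamma_0(24))$ — it suffices to go a little past $\frac{3}{12}[\mathrm{SL}_2(\mathbb{Z}):\Gamma_0(24)]=12$ — by multiplying out the products via $\eta(z)=q^{1/24}\prod(1-q^n)$, and then form the stated linear combinations, treating each $\alpha_i$ as a formal root of its minimal polynomial so that all coefficients lie in $\mathbb{Q}(\alpha_i)$. One then checks the Hecke multiplicativity relations on the Fourier coefficients of $f_i=\sum a_n q^n$, namely $a_ma_n=\sum_{d\mid\gcd(m,n)}\chi(d)d^{2}a_{mn/d^2}$; by Sturm's theorem applied to the cusp form $T_pf_i-a_p(f_i)f_i\in S_3(\Gamma_0(24),\chi)$, these finitely many checks imply $T_nf_i=a_n(f_i)f_i$ for all $n$ coprime to $24$, so $f_i$ is a normalised eigenform, and the computation simultaneously identifies the eigenvalue field as $\mathbb{Q}(\alpha_i)$ and confirms the asserted minimal polynomial of $\alpha_i$.

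For newness I would argue by a dimension count together with the orbit sizes read off from the $\alpha_i$. Since $\chi_{_{-24}}$ has conductor $24=N$, there are no oldforms, so $S_3(\Gamma_0(24),\chi_{_{-24}})=S_3^{new}(\Gamma_0(24),\chi_{_{-24}})$ has dimension $6$; the rational eigenforms $f_3,f_4$ (Galois orbit of size $1$ each, with distinct systems — compare $a_5$) and $f_5$ (coefficients in the degree-$4$ field $\mathbb{Q}(\alpha_3)$, orbit of size $4$) are pairwise non-conjugate, hence account for all $6$ dimensions and are exactly the newforms. For $\chi_{_{-8}}$ the only proper divisor of $24$ divisible by $\mathrm{cond}(\chi_{_{-8}})=8$ is $8$ itself, and the dimension formulas (\cite[Section 6.3]{stein}) give $\dim S_3(\Gamma_0(8),\chi_{_{-8}})=1$, so $\dim S_3^{old}(\Gamma_0(24),\chi_{_{-8}})=2$ and $\dim S_3^{new}=4$; the orbit of $f_2$ has dimension $4$ since $\mathbb{Q}(\alpha_2)$ has degree $4$, and it meets the oldform space trivially because the latter is a rational-eigenvalue eigenspace (spanned by $g(z),g(3z)$ for the rational level-$8$ newform $g$) whereas $f_2$ has irrational Hecke eigenvalues — hence that orbit is all of $S_3^{new}(\Gamma_0(24),\chi_{_{-8}})$. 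For $\chi_{_{-3}}$ the relevant proper divisors of $24$ are $3,6,12$; the dimension formulas give $\dim S_3^{old}(\Gamma_0(24),\chi_{_{-3}})=2$ (from the rational level-$12$ newform), so $\dim S_3^{new}=4-2=2$, which is exhausted by the degree-$2$ orbit of $f_1$, again disjoint from the oldform space by the same rational-versus-irrational eigenvalue argument. In each case a normalised $T_n$-eigenform ($\gcd(n,24)=1$) lying in the new subspace is, by multiplicity one, a newform — and automatically a $U_2$- and $U_3$-eigenform — which finishes the proof.

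The conceptual content is light; the work is computational, and the main obstacle is bookkeeping: expanding up to six eta quotients per character to the Sturm bound, assembling them over the number fields $\mathbb{Q}(\alpha_i)$, and running the Hecke checks. The one point requiring genuine care is ruling out that a candidate is an oldform, which is handled above by comparing the degree of its eigenvalue field with the degree-one eigenvalue fields of the relevant lower-level newforms, after which multiplicity one in the new subspace delivers the conclusion.
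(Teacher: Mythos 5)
Your proposal is correct in outline but follows a genuinely different route from the paper. The paper's proof is much shorter and leans on an external source: it takes the first ten Fourier coefficients of each newform directly from the LMFDB \cite{lmfdb}, notes that such a newform lies in $S_3(\Gamma_0(24),\chi)$ and hence is a linear combination of the eta quotients in the relevant ordered basis from Theorems \ref{th:8_1}--\ref{th:1}, and solves the resulting (triangular, since the basis elements have distinct orders at $\infty$) linear system for the coefficients; the newform property itself is not re-proved but imported. You instead reconstruct everything from scratch: normalisation from the order-$1$-at-$\infty$ leading basis element, the Hecke eigenform property by explicit $q$-expansion to a Sturm bound, and newness by dimension counts ($\dim S_3^{new}=2,4,6$ for $\chi_{_{-3}},\chi_{_{-8}},\chi_{_{-24}}$) combined with Galois-orbit sizes and the fact that the oldform systems come from rational lower-level newforms, finishing with multiplicity one. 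What your approach buys is independence from the database and an actual proof that these forms are new; what the paper's buys is brevity. One point in your sketch needs tightening: checking $T_pf_i=a_pf_i$ for finitely many primes $p$ only yields eigenform status for \emph{all} $T_n$ with $\gcd(n,24)=1$ if you invoke the standard fact that the Hecke algebra on $S_3(\Gamma_0(24),\chi)$ is generated as a $\mathbb{Z}[\chi]$-module by the $T_n$ with $n$ up to the Sturm bound $12$; and verifying $T_pf_i=a_pf_i$ via Sturm requires the $q$-expansions of the eta quotients out to roughly $12p$ terms (so up to about $132$ for $p=11$), not merely "a little past $12$". These are computational and citational details rather than gaps in the strategy, but they should be stated explicitly for the argument to be complete.
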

\begin{proof}
First $10$ Fourier coefficients of newforms in $S^{new}_3(\Gamma_0(24),\chi)$ are given in \cite{lmfdb}, for example, we have
\begin{align}
F(z):=q + \alpha_1 q^3 + (-2 \alpha_1 +2)q^5 - 6q^7 + (2\alpha_1 -9)q^9 +O(q^{10}) \in S^{new}_3(\Gamma_0(24),\chi_{_{-3}}). \label{eq:3}
\end{align}
On the other hand, $F(z) \in S_3(\Gamma_0(24),\chi)$, that is by Theorem \ref{th:8_1} we have
\begin{align}
F(z):=x_1 S(3,24,\chi_{_{-3}})_1+x_2 S(3,24,\chi_{_{-3}})_2+x_3 S(3,24,\chi_{_{-3}})_3+x_4 S(3,24,\chi_{_{-3}})_4, \label{eq:2}
\end{align}
for some $x_1,x_2, x_3,x_4 \in \cc$. We expand the eta quotients in (\ref{eq:2}), and compare first ten coefficients of (\ref{eq:2}) with (\ref{eq:3}). We solve the resulting linear equations and find 
\beqars
x_1=1, ~x_2=0, ~x_3=\alpha_1+3, ~x_4= 4. 
\eeqars
The rest can be proven similarly.
\end{proof}

\section*{Remarks}
We did some further investigations on the spaces mentioned in this paper. Using methods similar to \cite{alacaaygin}, we determined that there are $6332$, $6288$, $2424$ and $2424$ eta quotients in $M_3(\Gamma_0(24),\chi_{_{-3}})$, $M_3(\Gamma_0(24),\chi_{_{-4}})$, $M_3(\Gamma_0(24),\chi_{_{-8}})$ and $M_3(\Gamma_0(24),\chi_{_{-24}})$, respectively. Using methods similar to \cite{alacaaygin5} we find, among them $140$, $40$, $4$ and none can be written in terms of Eisenstein series, respectively. This allows us to determine the Fourier coefficients of these eta quotients in terms of sum of divisors functions defined by (\ref{3_1}). Below, as an example, we list the eta quotients which can be written with no more than two Eisenstein series.

\begin{align*}
 \eta_{3}[-3, 9](z)&= \sum_{n=1}^\infty \sigma_{3,\chi_{_{1}},\chi_{_{-3}}}(n) q^n, \\
 \eta_{6}[-4, 5, 4, 1](z)&= \sum_{n=1}^\infty (\sigma_{3,\chi_{_{1}},\chi_{_{-3}}}(n)+ \sigma_{3,\chi_{_{1}},\chi_{_{-3}}}(n/2)) q^n, \\ 
 \eta_{6}[4, 1, -4, 5](z)&= \sum_{n=1}^\infty (\sigma_{3,\chi_{_{-3}},\chi_{_{1}}}(n) - \sigma_{3,\chi_{_{-3}},\chi_{_{1}}}(n/2)) q^n,\\
 \eta_{4}[-4, 6, 4](z)&=\sum_{n=1}^\infty \sigma_{3,\chi_{_{1}},\chi_{_{-4}}}(n) q^n, \\
 \eta_{8}[-4, 2, 16, -8](z)&= 1+ 4 \sum_{n=1}^\infty ( \sigma_{3,\chi_{_{1}},\chi_{_{-4}}}(n) - \sigma_{3,\chi_{_{-4}},\chi_{_{1}}}(n/2) ) q^n, \\
 \eta_{4}[-12, 30, -12](z)&= 1+4 \sum_{n=1}^{\infty} (4 \sigma_{3,\chi_{_{1}},\chi_{_{-4}}}(n) - \sigma_{3,\chi_{_{-4}},\chi_{_{1}}}(n) ) q^n, \\
 \eta_{4}[4, -6, 8](z)&= \sum_{n=1}^{\infty} ( \sigma_{3,\chi_{_{1}},\chi_{_{-4}}}(n) - 8 \sigma_{3,\chi_{_1},\chi_{_{-4}}}(n/2) ) q^n,\\
 \eta_{4}[-4, 18, -8](z)&= 1+4 \sum_{n=1}^{\infty} ( \sigma_{3,\chi_{_{-4}},\chi_{_{1}}}(n) - 2 \sigma_{3,\chi_{_{-4}},\chi_{_{1}}}(n/2) ) q^n, \\
 \eta_{8}[-2, -5, 23, -10](z)&= 1+ \frac{2}{3}\sum_{n=1}^{\infty} ( 4 \sigma_{3,\chi_{_{1}},\chi_{_{-8}}}(n) - \sigma_{3,\chi_{_{-8}},\chi_{_{1}}}(n) ) q^n.
\end{align*}
Some of these equations were previously known.

\section*{Acknowledgments} 
The author was supported by the Singapore Ministry of Education Academic Research Fund, Tier 2, project number MOE2014-T2-1-051, ARC40/14.


\begin{thebibliography}{0}

\bibitem{alacaaygin}
{A. Alaca, \c S. Alaca and Z. S. Aygin,} 
\newblock {\it Fourier coefficients of a class of eta quotients of weight $2$,} 
\newblock Int. J. Number Theory {\bf 11} (2015), 2381--2392.

\bibitem{alacaaygin5}
{A. Alaca, \c S. Alaca and Z. S. Aygin,} 
\newblock {\it Theta products and eta quotients of level $24$ and weight $2$,} 
\newblock Funct. Approx. Comment. Math. Advance Publication, 28 March 2017. doi: 10.7169/facm/1628.

\bibitem{aaw_1}
{A. Alaca, \c S. Alaca, F. Uygul and K. S. Williams,} 
\newblock {\it Representations by sextenary quadratic forms whose coefficients are 1, 2 and 4,}
\newblock Acta Arith. {\bf 141} (2010), no. 3, 289--309.

\bibitem{aaw_2}
{A. Alaca, \c S. Alaca and K. S. Williams,} 
\newblock {\it Sextenary quadratic forms and an identity of Klein and Fricke,}
\newblock Int. J. Number Theory {\bf 6} (2010), no. 1, 169--183.

\bibitem{aaw_3}
{A. Alaca, \c S. Alaca and K. S. Williams,} 
\newblock {\it Liouville's sextenary quadratic forms $x2 + y2 + z2 + t2 + 2u2 + 2v2$, $x2 + y2 + 2z2 + 2t2 +2u2 +2v2$ and $x2 +2y2 +2z2 +2t2 +2u2 +4v2$,} 
\newblock Far East J. Math. Sci. (FJMS) {\bf 30} (2008), no. 3, 547--556.

\bibitem{aaw_4}
{A. Alaca, \c S. Alaca and K. S. Williams,} 
\newblock {\it Some new theta function identities with applications to sextenary quadratic forms,}
\newblock J. Comb. Number Theory {\bf 1} (2009), no. 1, 91--100.

\bibitem{alacakesici}
{\c S. Alaca, Y. Kesicio\~{g}lu,}
\newblock {\it Representations by certain octonary quadratic forms whose coefficients are $1$, $2$, $3$ and $6$,}
\newblock Int. J. Number Theory {\bf 10} (2014), 133--150.

\bibitem{alacaw}
{\c S. Alaca, K. S. Williams,} 
\newblock {\it The number of representations of a positive integer by certain octonary quadratic forms,} 
\newblock Funct. Approx. Comment. Math. {\bf 43} (2010), 45--54.

\bibitem{berkovicyesilyurt}
{A. Berkovich, H. Yesilyurt,}
\newblock {\it On the representations of integers by the sextenary quadratic form $x^2+y^2+z^2+7s^2+7t^2+7u^2$ and $7$-cores,}
\newblock J. Number Theory {\bf 129} (2009), no. 6, 1366--1378.

\bibitem{Berndt}
{B. C. Berndt,} 
\newblock {\it Number Theory in the Spirit of Ramanujan,} 
\newblock Springer-Verlag, 1991.


\bibitem{chanchua}
{H. H. Chan, K. S. Chua,}
\newblock {\it Representations of integers as sums of 32 squares,}
\newblock Ramanujan J., {\bf 7} (2003), no. 1--3, 79--89.

\bibitem{cooper2}
{S. Cooper,}
\newblock {\it On sums of an even number of squares, and an even number of triangular numbers: an elementary approach based on Ramanujan's ${ }_{1}\psi_{1}$ summation formula,}
\newblock Contemp. Math., {\bf 291}, Amer. Math. Soc., Providence, RI, (2001).


\bibitem{grosswald}
{E. Grosswald,}
\newblock {\it Representations of integers as sums of squares,}
\newblock Springer-Verlag, New York, (1985).

\bibitem{hamieh}
{A. Hamieh,}
\newblock {\it Ternary quadratic forms and half-integral weight modular forms,}
\newblock LMS J. Comput. Math., {\bf 15} (2012), 418--435.

\bibitem{iwaniec}
{H. Iwaniec,}
\newblock {\it Topics in Classical Automorphic Forms,}
\newblock Graduate Studies in Mathematics, 17. American Mathematical Society, Providence, RI, 1997.

\bibitem{Kilford}
{L. J. P. Kilford,}
\newblock {\it Modular forms, a classical and computational introduction,}
\newblock Imperial College Press, London, 2008.

\bibitem{kokluce-1}
{B. K\"{o}kl\"{u}ce,}
\newblock {\it The representation numbers of three octonary quadratic forms,}
\newblock Int. J. Number Theory {\bf 9} (2013), 505--516.

\bibitem{Ligozat}
{G. Ligozat,}
\newblock {\it Courbes modulaires de genre 1,}
\newblock Bull. Soc. Math. France {\bf 43} (1975), 5--80.

\bibitem{Lovejoy}
{J. Lovejoy,}
\newblock {\it Ramanujan-type congruences for three colored Frobenius partitions,}
\newblock J. Number Theory {\bf 85} (2000), no. 2, 283--290.

\bibitem{martinono}
{Y. Martin and K. Ono,}
\newblock {\it Eta-quotients and Elliptic Curves,}
\newblock  Proc. Amer. Math. Soc. {\bf 125} (1997), no. 11, 3169--3176.

\bibitem{milne}
{S. C. Milne,}
\newblock {\it Infinite families of exact sums of squares formulas, Jacobi elliptic functions, continued fractions, and Schur functions,}
\newblock The Ramanujan J., {\bf 6} (2002), no. 1,  7--149.

\bibitem{miyake1}
{T. Miyake,}
\newblock {\it Modular Forms,}
\newblock Springer-Verlag, Berlin (1989), translated from the Japanese by Yoshitaka Maeda.

\bibitem{vaughan}
{H. L. Montgomery, R. C. Vaughan,}
\newblock {\it Multiplicative Number Theory: I. Classical Theory,}
\newblock Cambridge University Press, Cambridge, (2007).

\bibitem{onorepr}
{K. Ono,}
\newblock {\it Representations of integers as sums of squares,}
\newblock J. Number Theory {\bf 95} (2002), no. 2, 253--258.




\bibitem{stein}
{W. A. Stein,}
\newblock {\it Modular forms, a computational approach,}
\newblock Graduate Studies in Mathematics, 79. American Mathematical Society, Providence, RI, (2007).

\bibitem{lmfdb}
{The LMFDB Collaboration,}
\newblock {\it The L-functions and Modular Forms Database, }
\newblock Newforms of weight $3$ for $\Gamma_1(24)$, http://www.lmfdb.org/ModularForm/GL2/Q/holomorphic/24/3/?group=1, 2017 , [Online; accessed 15 April 2017].

\bibitem{xiayao_1}
{ E. X. W. Xia, O. X. M. Yao, and A. F. Y. Zhao, }
\newblock {\it Representation numbers of five sextenary quadratic forms,}
\newblock Colloq. Math. {\bf 138} (2015), no. 2, 247--254.

\end{thebibliography}
\end{document}